\newcommand*{\mailto}[1]{\href{mailto:#1}{\nolinkurl{#1}}}
\newtheorem{theorem}{Theorem}[section]
\newtheorem{definition}[theorem]{Definition}
\newtheorem{lemma}[theorem]{Lemma}
\newtheorem{corollary}[theorem]{Corollary}
\newtheorem{remark}[theorem]{Remark}
\newcommand{\R}{{\mathbb R}}
\newcommand{\N}{{\mathbb N}}
\newcommand{\C}{{\mathbb C}}
\newcommand{\nn}{\nonumber}
\newcommand{\be}{\begin{equation}}
\newcommand{\ee}{\end{equation}}
\newcommand{\ti}{\tilde}
\newcommand{\id}{{\rm 1\hspace{-0.6ex}l}}
\newcommand{\E}{\mathrm{e}}
\newcommand{\I}{\mathrm{i}}
\newcommand{\im}{\mathrm{Im}}
\newcommand{\re}{\mathrm{Re}}
\newcommand{\floor}[1]{\lfloor#1 \rfloor}
\newcommand{\eps}{\varepsilon}
\newcommand{\sig}{\sigma}
\newcommand{\lam}{\lambda}
\newcommand{\gam}{\gamma}
\newcommand{\gk}{\kappa}
\numberwithin{equation}{section}
\begin{document}

\title[Spectral Asymptotics for Schr\"odinger Operators]{Spectral Asymptotics for Perturbed Spherical Schr\"odinger Operators and Applications to Quantum Scattering}

\author[A.\ Kostenko]{Aleksey Kostenko}
\address{Faculty of Mathematics\\ University of Vienna\\
Nordbergstrasse 15\\ 1090 Wien\\ Austria}
\email{\mailto{duzer80@gmail.com};\mailto{Oleksiy.Kostenko@univie.ac.at}}

\author[G.\ Teschl]{Gerald Teschl}
\address{Faculty of Mathematics\\ University of Vienna\\
Nordbergstrasse 15\\ 1090 Wien\\ Austria\\ and International
Erwin Schr\"odinger
Institute for Mathematical Physics\\ Boltzmanngasse 9\\ 1090 Wien\\ Austria}
\email{\mailto{Gerald.Teschl@univie.ac.at}}
\urladdr{\url{http://www.mat.univie.ac.at/~gerald/}}

\dedicatory{Dedicated with great pleasure to Israel Samoilovich Kac on the occasion of his 85th birthday.}
\thanks{{\it Research supported by the Austrian Science Fund (FWF) under Grant No.\ Y330 and No.\ M1309}}
\thanks{Commun. Math. Phys. {\bf 322}, 255--275 (2013)}

\keywords{Schr\"odinger operators, spectral theory, strongly singular potentials, scattering theory}
\subjclass[2000]{Primary 34B20, 34L25; Secondary 34L05, 47A10}

\begin{abstract}
We find the high energy asymptotics for the singular Weyl--Titchmarsh $m$-functions and the associated spectral measures
of perturbed spherical Schr\"o\-dinger operators (also known as Bessel operators). We apply this result to establish
an improved local Borg--Marchenko theorem for Bessel operators as well as uniqueness theorems for the radial
quantum scattering problem with nontrivial angular momentum.
\end{abstract}

\maketitle

\section{Introduction}

In this paper we will investigate perturbed spherical Schr\"odinger operators (also known as Bessel operators)
\be\label{eq:bes}
\tau=-\frac{d^2}{dx^2}+\frac{l(l+1)}{x^2}+q(x),\quad l\ge -\frac{1}{2},\qquad x\in\R_+:=(0,+\infty),
\ee
where the potential $q$ is real-valued satisfying
\begin{equation}\label{q:hyp}
q \in L^1_{\mathrm{loc}}(\R_+), \qquad \begin{cases} x\, q(x) \in L^1(0,1), & l>-\frac{1}{2},\\
x(1-\log(x)) q(x) \in L^1(0,1), & l = -\frac{1}{2}. \end{cases}
\end{equation}
Note that we explicitly allow non-integer values of $l$ such that we also cover the case of
arbitrary space dimension $n\ge 2$, where $l(l+1)$ has to be replaced by $l(l+n-2) + (n-1)(n-3)/4$ \cite[Sect.~17.F]{wdln},
or the case of scattering of waves and particles in conical domains \cite{che}.
Due to its physical importance this equation has obtained much attention in the past
and we refer for example to \cite{ahm}, \cite{ahm2}, \cite{gr}, \cite{kst}, \cite{lv}, \cite{se}, \cite{wdln} and the references therein.

Of course one of the most interesting applications is the scattering problem in $\R^3$ with a spherically symmetric potential.
Originating in the seminal work of Heisenberg, the question if the potential $q$ is determined
by the scattering phase for one fixed value of the angular moment $l$ has a venerable history.
 In the simplest case $l=0$ this question is completely understood by now and we refer, e.g, to the monographs \cite{cs}, \cite{fad}, or \cite{Mar11}.
However, for general $l$ this question remained open to the best of our knowledge. As one of our main results we will show
that, in combination with the eigenvalues and norming constants, the scattering phase
determines the potential uniquely. 

Our approach will use singular Weyl--Titchmarsh--Kodaira theory as one basic ingredient. This approach was first used by Kodaira \cite{ko} in connection with radial
scattering theory as already mentioned above. This original work contained some gaps first pointed out by Kac \cite{ka} who proposed an alternate approach based
on Krein's method of directing functionals. However, these results did not
get much attention until recently when Gesztesy and Zinchenko \cite{gz} took it up again and triggered a large amount of results
\cite{eck}, \cite{et2}, \cite{ful08}, \cite{fl}, \cite{fll}, \cite{kst}, \cite{kst2}, \cite{kst3}, \cite{kt}.

Let us outline the content of our paper. We will first establish the high energy asymptotics of the singular Weyl function and the spectral measure associated
with operators of the type \eqref{eq:bes} (see Theorem \ref{th:main}). As one expects, the leading term in the asymptotic behavior is given by the (known)
unperturbed Weyl function. However, while the form of this result suggests it can be proven by a straightforward perturbation argument, this is not the case.
In fact, the definition of the Weyl function depends on a basis of entire (with respect to the spectral parameter $z$) solutions $\phi(z,x)$ and $\theta(z,x)$ of
the underlying differential equation. But while the {\em regular} solution $\phi(z,x)$ can be obtained using the usual iteration scheme, this fails for the
{\em singular} solution $\theta(z,x)$ (unless additional stringent assumptions on $q(x)$ are made --- see also Remark~\ref{rem:2.01}). This is precisely
the fact originally overlooked by Kodaira \cite{ko} and first pointed out by Kac \cite{ka} (who conjectured that such a solution will not exist at all
in general -- see his footnote on p. 206). Moreover, there have also been some additional incorrect constructions and we refer to \cite{kst2}, \cite{kt} for
further details. Consequently, the usual strategy for proving high energy asymptotics of the Weyl function are not available in this situation.
Hence we use an entirely different approach based on two main ingredients: The one-term asymptotic formula for the $m$-function
(Theorem \ref{th:ben}) and the commutation methods for Bessel operators \cite{kst3}. The proof will be given in Section~\ref{sec:krein}, where
we will use a Liouville type transform to establish a connection between the perturbed Bessel operators and the Krein string operators $-\frac{d^2}{r(\xi)d\xi^2}$.

In the remaining sections we will then demonstrate the usefulness of our main result.
In Section~\ref{sec:BM}, we apply Theorem~\ref{th:main} to derive an improved local Borg--Marchenko result
for Bessel operators (including some applications to inverse spectral uniqueness results). 
In the final section, we will apply our finding to the quantum mechanical scattering problem.
More precisely, using the connection between the Jost function \cite{jo47} and the singular $m$-function and then applying our local Borg--Marchenko result (Theorem~\ref{th:uniq}),
we are able to establish uniqueness in terms of both the absolute value of the Jost function and the phase shift (see Theorems \ref{th:4.1} and \ref{th:5.3}).
Let us also mention that basic properties of the Jost function in the case of general $l$ were first formulated in \cite{ko}. Asymptotics for the Jost solutions and the Jost functions
are well-known and were investigated by several authors \cite{cc}, \cite{gt}, \cite{gpt}. Moreover, a scattering approach between the unperturbed
operators for $l=0$ and $l\in[-\frac{1}{2},\frac{1}{2})$ was recently introduced in \cite{ahm2}.

Finally, necessary results on the asymptotic behavior of $m$-functions of regular Sturm--Liouville problems are collected in Appendix A.
In Appendix~\ref{app:B}, we provide some estimates and analytical properties of the solutions of $\tau y=z\, y$.

\section{Asymptotics of the Weyl function}

Let $(0,b) \subseteq \R_+$ be some open interval.
We will use $\tau$ to describe the formal differential expression and $H$ to describe
the self-adjoint operator acting in $L^2(0,b)$ and given by $\tau$ together with the usual boundary condition at $x=0$:
\be\label{eq:bc_dir}
\lim_{x\to0} x^l ( (l+1)f(x) - x f'(x))=0, \qquad l\in[-\frac{1}{2},\frac{1}{2}).
\ee
We are mainly interested in the case where $\tau$ is limit point at $b$, but if it is not, we simply
choose another boundary condition there.

Next, notice that our assumption \eqref{q:hyp} implies that the equation 
\be\label{eq:sp01}
\tau y = z y 
\ee
has a system of solutions $\phi(z,x)$ and $\theta(z,x)$ which is real entire with respect to $z$ such that 
\be\label{eq:fs01}
\phi(z,x)=x^{l+1}\ti{\phi}(z,x),\quad  
\theta(z,x)=
\begin{cases} \frac{x^{-l}}{2l+1}\ti{\theta}(z,x), & l>-\frac{1}{2},\\
-x^{1/2}\log(x)\ti{\theta}(z,x) , & l = -\frac{1}{2}, \end{cases},
\ee
 where $\ti{\phi},\ti{\theta}\in W^{1,1}[0,1]$ and  $\ti{\phi}(0)=\ti{\theta}(0)=1$. For a detailed construction of these solutions we refer to, e.g., \cite{kt}.
 {\em The singular Weyl function} $m:\C\setminus\R\to \C$ is defined such that 
 \be\label{eq:s_m}
\psi(z,x)= \theta(z,x) + m(z) \phi(z,x),\quad \lambda\in\C\setminus\R, 
\ee
either belongs to $L^2(c,b)$ for some (and hence for all) $c\in(0,b)$ if $H$ is limit point at $b$, or satisfies the boundary condition at $x=b$ if $H$ is limit circle at $b$.
Note that, while the first solution $\phi(z,x)$ is unique under this normalization, the second solution $\theta(z,x)$ is not, since for any real entire $f(z)$ the new solution
$\ti{\theta}(z,x)=\theta(z,x)-f(z)\phi(z,x)$ also satisfies \eqref{eq:fs01}. Note that the corresponding singular $m$-function $\ti{m}$ is given by
\be
\ti{m}(z)=m(z)+f(z)
\ee
in this case.
Moreover, it was shown in \cite{kst}, \cite{kt} that the singular $m$-function \eqref{eq:s_m} admits the following integral representation 
\be\label{eq:int_rep}
m(z)=g(z)+(1+z^2)^{\kappa_l}\int_{\R}\Big(\frac{1}{\lam-z}-\frac{\lam}{1+\lam^2}\Big)\frac{d\rho(\lam)}{(1+\lam^2)^{\kappa_l}},\quad z\notin\R.
\ee
Here $\kappa_l:=\floor{\frac{l}{2}+\frac{3}{4}}$, the function $g$ is real entire, and $\rho:\R\to\R$ is a nondecreasing function satisfying
\be\label{eq:sm_norm}
\rho(\lam)=\frac{\rho(\lam+)+\rho(\lam-)}{2},\quad \rho(0)=0,\quad \int_{\R}\frac{d\rho(\lam)}{(1+\lam^2)^{\kappa_l+1}}<\infty.
\ee
The operator $H$ is unitarily equivalent to multiplication by the independent variable in $L^2(\R,d\rho)$ and thus
$\rho$ is called {\em the spectral function} and $d\rho$ is {\em the spectral measure}. Moreover, $m$ can be identified as a $Q$
function, in the sense of Krein, in the framework of super-singular perturbations. This is explained in detail in \cite{kt} (see also \cite{DSh_00}, \cite{kl} and \cite{Der98} and references therein). We also remark that the value of
$\kappa_l$ is best possible (see again \cite{kt} and also \cite{kl}, where the case $q(x)=\frac{a}{x}$, $a\in\R$, was treated).

In the special case $l=0$ the function $m$ is the classical $m$-function and Marchenko \cite{Mar52} proved that in any nonreal sector in $\C_+$ the $m$-function satisfies
\be\label{eq:mar}
m(z)= -\sqrt{-z}(1+o(1)),\quad |z|\to +\infty,
\ee
where the branch cut of the root is taken along the negative real axis.
The estimate for the remainder term was later improved  by Krein, Levitan, and Marchenko (see, e.g., \cite{Mar11}, \cite{tschroe}).
By now there is a vast literature on high energy asymptotics for $m$-functions of general Sturm--Liouville operators and it
seems the first results were obtained by M.G. Krein and I.S. Kac \cite{K71, K73, KK2}, Hille \cite{hil63},  Everitt \cite{ev72} and Kasahara \cite{kas}.
The most complete results on one-term asymptotics for the $m$-functions were obtained by Bennewitz \cite{Ben} (cf.\ also our Appendix \ref{app:A}).  

In the present paper we are interested in the asymptotic behavior of the singular $m$-function \eqref{eq:s_m} of perturbed Bessel operators. 
Our main result is the following extension of Marchenko's asymptotic formula \eqref{eq:mar}.
\begin{theorem}\label{th:main}
Suppose
\be
H = -\frac{d^2}{dx^2} + \frac{l(l+1)}{x^2} + q(x),
\ee
where $q$ satisfies \eqref{q:hyp}.
Let $m(\cdot)$ be the singular $m$-function \eqref{eq:s_m}. Then there is a real entire function $g$ such that in any nonreal sector,
\be\label{eq:2.10}
m(z)-g(z)=m_l(z)(1+o(1)),\quad |z|\to+\infty,
\ee
where
\be\label{eq:II.11}
m_l(z) = \begin{cases}
\frac{-C_l^2}{\sin((l+\frac{1}{2})\pi)} (-z)^{l+\frac{1}{2}}, & {l+\frac{1}{2}}\in\R_+\setminus \N_0,\\
\frac{-C_l^2}{\pi} z^{l+\frac{1}{2}}\log(-z), & {l+\frac{1}{2}} \in\N_0,\end{cases}\quad C_l=\frac{\sqrt{\pi}}{\Gamma(l+\frac{3}{2}) 2^{l+1}}.
\ee
Moreover, the spectral function satisfies
\be
\rho(\lam)=\rho_l(\lam)(1+o(1)),\quad \lam\to +\infty,
\ee
where
\be
\rho_l(\lam) = \frac{C_l^2}{\pi(l+\frac{3}{2})}\chi_{[0,\infty)}(\lam) \lam^{l+\frac{3}{2}},  \qquad l \geq -\frac{1}{2}.
\ee
\end{theorem}   

\begin{remark}
We can always choose a singular solution $\theta(z,x)$ to be a Frobenius type solution (for a definition and basic properties we refer to \cite{kt}). Note that in this case the function $g(.)$ in \eqref{eq:int_rep} and hence in \eqref{eq:2.10} becomes a real polynomial of order no greater than $2\kappa_l+1$ (see \cite[Thm.~4.5]{kt}). 
\end{remark}

%%%%%%%%%%%%%%%%%%%%%%%%%%%%
%%%%%%%%%%%%%%%%%%%%%%%%%%%%

\section{Proof of Theorem~\ref{th:main}}\label{sec:krein}

First, let us note that it suffices to prove Theorem \ref{th:main} in the case when $b$ is a regular endpoint. In fact,
by \cite[Lem.~7.1]{kst}, we know
\be
m(z)=-\frac{\theta(z,c)}{\phi(z,c)}+O(\frac{1}{\sqrt{-z}\phi^2(z,c)}),
\ee
as $|z|\to +\infty$ in any nonreal sector. Hence the asymptotic behavior of $m$ depends only on the behavior of $q$ near
$0$ and is independent of the behavior of $q$ outside a neighborhood of $0$ as well as of a possible boundary condition
at $b$.

Therefore, without loss of generality we set $b=1$ and assume that $q\in L^1(c,1)$ for some $c\in (0,1)$. We divide the proof into three steps.
First we establish a connection with the Krein string operator in the case $l\in[-\frac{1}{2},\frac{1}{2})$. Second we use this to prove our
main result in this case. Third we extend this result to all $l$ using commutation methods.

\subsection{Connection with the Krein string operator}\label{ss:krein}
In this section we restrict our considerations to the case when $x=0$ is in the limit circle case, that is $l\in[-\frac{1}{2},\frac{1}{2})$.
It is well-known that under this assumption the self-adjoint operator $H$ associated with $\tau$ is lower semibounded.
We continue to use the same basis of solutions $\phi(z,x)$ and $\theta(z,x)$ as in the previous section.
However, instead of the boundary condition \eqref{eq:bc_dir} induced by $\phi$ we will now use the one induced by $\theta(\lam_0)$
plus an arbitrary boundary condition at $x=1$,
\be\label{eq:bc}
\lim_{x\to0} W_x(f,\theta(\lam_0))=0,\quad \cos(\beta) f(1)- \sin(\beta) f'(1)=0.
\ee
Here $\lam_0\in\R$ and $\beta\in [0,\pi)$ are fixed, and we can assume that $\theta(z,x)$ satisfies $W(\theta(\lam_0),\theta(z))=0$ (cf. \cite[App.~A]{kst2}). In this case $\theta(z,x)$ continues to satisfy \eqref{eq:fs01}.

The singular Weyl function is then defined such that
\be\label{eq:s_mN}
\psi(z,x)= \phi(z,x) - \ti{m}(z) \theta(z,x)
\ee
satisfies the boundary condition at $x=1$. In this case $\ti{m}$ becomes the classical $m$-function (see, e.g., \cite[App.~A]{kst2}) and hence it will be a Herglotz--Nevanlinna function satisfying
\be
\ti{m}(z) = \re(\ti{m}(\I)) + \int_\R \left(\frac{1}{\lam-z} - \frac{\lam}{1+\lam^2}\right) d\rho(\lam),
\ee
where the spectral measure $\ti{\rho}$ satisfies
$\int_\R d\ti{\rho}(\lam)=\infty$ and $\int_\R \frac{d\ti{\rho}(\lam)}{1+\lam^2}<\infty$.

Let $r\in L^1(0,a)$ be a positive function on $(0,a)$ and consider the Krein string operator
\be
L=-\frac{1}{r(\xi)}\frac{d^2}{d\xi^2}
\ee
acting in the weighted Hilbert space $L^2_{r}(0,a)$ and subject to the boundary conditions
\be\label{eq:bc_b}
y'(0)= \cos(\tilde{\beta}) y(a) - \sin(\tilde{\beta}) y'(a) =0.
\ee
Let also $c(z,\xi)$ and $s(z,\xi)$ be the fundamental solutions of 
\be\label{eq:sp02}
-y'' =z\, r(\xi)y,\quad \xi\in [0,a],
\ee
such that
 \be\label{eq:fs02}
c(z,0)=s'(z,0)=1,\quad c'(z,0)=s(z,0)=0.
\ee
Define the corresponding $m$-function $M:\C\setminus\R\to \C$ as follows:
\be\label{eq:Mb}
y_\beta(z,\xi):=s(z,\xi)-M(z)c(z,\xi)\quad \text{satisfies \eqref{eq:bc_b} at $\xi=a$}.
\ee

The main aim of this section is to establish a connection between the operators $L$ and $H$ and hence between the $m$-coefficients $\ti{m}$ and $M$. 

Without loss of generality we can assume that $H\ge \eps\id$ with some $\eps>0$. Hence the solution $\theta_0(x):=\theta(0,x)$ is positive on $(0,1]$. Then 
 (see, e.g., \cite[\S 14]{KK2}), we set
 \be\label{eq:r}
 \xi:=\xi(x)=\int_0^x \frac{dt}{\theta_0^2(t)},\quad a:=\xi(1),\quad r(\xi):=\theta_0^4(x),
 \ee 
and define the map $U:L^2(0,1)\to L^2_{r}(0,a)$ as follows:
\be\label{eq:U}
U:v(x)\to u(\xi):= \frac{1}{\theta_0(x)}v(x).
\ee
First of all, notice that $r\in L^1(0,a)$. Indeed, using \eqref{eq:fs01}, we get
\[
\int_0^a r(\xi)d\xi=\int_0^1\theta_0^4(x)\frac{dx}{\theta_0^2(x)}=\int_0^1\theta_0^2(x)dx<\infty.
\]
Hence the above definition is correct. Moreover, $U$ is isometric:
\[
\|Uv\|^2_{L^2_{r}}=\int_0^a|u(\xi)|^2r(\xi)d\xi=\int_0^1 \Big| \frac{v(x)}{\theta_0(x)}\Big|^2 \theta_0^4(x) \frac{dx}{\theta_0^2(x)}=\int_0^1|v|^2dx=\|v\|^2_{L^2}.
\]
Furthermore, it is not difficult to check that $\ti{y}:=Uy$ solves \eqref{eq:sp02} if $y$ is a solution of \eqref{eq:sp01}. Indeed, this is immediate from the following representation of \eqref{eq:sp01} (cf. \cite[\S 14]{KK2})
\[
-\theta_0^2(x)\frac{d}{dx}\Big(\theta_0^2(x)\frac{d}{dx}\frac{y}{\theta_0(x)}\Big)=z\, \theta_0^4(x)\, \frac{y}{\theta_0(x)}.
\]
Finally, let us show that $c(z,\xi)=U\theta(z)$ and $s(z,\xi)=U\phi(z)$. Notice that
\[
(Uy)(0)=\lim_{x\to 0}\frac{y(z,x)}{\theta_0(x)}
=\begin{cases}
(2l+1)\lim_{x\to 0}x^{l}y(z,x),& l\in(-\frac{1}{2},\frac{1}{2}),\\
-\lim_{x\to 0}\frac{y(z,x)}{\sqrt{x}\log(x)},& l=-\frac{1}{2},
\end{cases}
\]
and 
\[
\frac{d}{d\xi}Uy\Big|_{\xi=0}=\lim_{x\to 0}\theta_0^2(x)\frac{d}{dx}\frac{y}{\theta_0(x)}=\lim_{x\to 0}\big(y'\theta_0-y\theta_0'\big)=\lim_{x\to 0}W_x(y,\theta_0).
\]
Hence, using the representation of $\phi$ and $\theta$ from \cite[\S 3]{kt}, we easily compute that
\[
(U\phi)(0)=0,\quad (U\phi)'_{\xi}(0)= 1,
\]
and 
\[
(U\theta)(0)= 1,\quad (U\theta)'_{\xi}(0)=0.
\]
Since both $U\phi$ and $U\theta$ solve \eqref{eq:sp02}, we are done. Similarly one computes
\be\label{eq:betas}
\cos(\ti{\beta}) (U\psi)(a)- \sin(\ti{\beta}) (U\psi)'(a)=0, \qquad \cot(\ti{\beta}) = \theta_0(1) \big(\cot(\beta)+1 \big).
\ee

Thus we proved the following result.

\begin{lemma}\label{lem:01}
Let $l\in [-1/2,1/2)$ and suppose \eqref{q:hyp}. Let also $a$,  $r(\cdot)$, and the unitary map $U$ be defined by \eqref{eq:r}--\eqref{eq:U}. If $\beta$ and $\ti\beta$ are connected by \eqref{eq:betas}, then the operators $H$ and $L$ are unitarily equivalent: $H=U^{-1}LU$. Moreover, the fundamental solutions \eqref{eq:fs01} and \eqref{eq:fs02} are connected by
\[
c(z,\xi)=\big(U\phi(z)\big)(\xi),\quad s(z,\xi)=\big(U\theta(z)\big)(\xi),
\]
and the corresponding $m$-functions defined by \eqref{eq:s_mN} and \eqref{eq:Mb} satisfy
\be
\ti{m}(z)=M(z),\quad z\notin\R.
\ee
\end{lemma}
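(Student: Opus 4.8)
The plan is to realize the asserted unitary equivalence as a Liouville-type transform built from the positive solution $\theta_0(x)=\theta(0,x)$ of $\tau y=0$, which is strictly positive on $(0,1]$ precisely because we have arranged $H\ge\eps\id$. With $\xi$, $a$, $r$, and $U$ defined as in \eqref{eq:r}--\eqref{eq:U}, I would verify three things in turn: that $U$ is unitary from $L^2(0,1)$ onto $L^2_r(0,a)$; that $U$ intertwines the differential expression $\tau$ with the string expression in \eqref{eq:sp02}; and that $U$ carries the normalized solutions and the endpoint data of $H$ onto those of $L$. The identity $\ti m=M$ is then forced by the defining relations \eqref{eq:s_mN} and \eqref{eq:Mb}.

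The integrability $r\in L^1(0,a)$ and the isometry of $U$ are both one-line changes of variable using $d\xi=\theta_0^{-2}\,dx$, giving $\int_0^a r\,d\xi=\int_0^1\theta_0^2\,dx<\infty$ and $\|Uv\|_{L^2_r}=\|v\|_{L^2}$. The intertwining is the algebraic heart of the matter. Setting $w=y/\theta_0$, so that $\theta_0^2\,w'=W_x(y,\theta_0)$, I would differentiate once more and use $\tau\theta_0=0$ to cancel the first-derivative terms, obtaining
\be
-\theta_0^2\frac{d}{dx}\Big(\theta_0^2\frac{d}{dx}\frac{y}{\theta_0}\Big)=z\,\theta_0^4\,\frac{y}{\theta_0}
\ee
whenever $\tau y=zy$. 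Since $\theta_0^2\,d/dx=d/d\xi$ by the definition of $\xi$, and $r=\theta_0^4$, the left-hand side equals $-d^2(Uy)/d\xi^2$; hence $Uy$ solves \eqref{eq:sp02}, and $U$ maps the solution space of \eqref{eq:sp01} onto that of \eqref{eq:sp02}.

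The step I expect to be the main obstacle is matching the data at the singular endpoint $\xi=0$, since both $\theta_0$ and $1/\theta_0$ are singular there (with $\theta_0\sim x^{-l}/(2l+1)$ for $l>-\frac{1}{2}$ and $\theta_0\sim-\sqrt{x}\log x$ for $l=-\frac{1}{2}$), so the relevant limits are indeterminate and cannot be read off by substitution. Here I would write the boundary values as
\be
(Uy)(0)=\lim_{x\to0}\frac{y(z,x)}{\theta_0(x)},\qquad \frac{d}{d\xi}(Uy)\Big|_{\xi=0}=\lim_{x\to0}W_x(y,\theta_0),
\ee
and evaluate them separately in the two cases $l\in(-\frac{1}{2},\frac{1}{2})$ and $l=-\frac{1}{2}$, feeding in the leading behaviour of $\phi$, $\theta$, and $\theta_0$ from \eqref{eq:fs01} together with the finer expansions of \cite[\S3]{kt}. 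This yields $(U\phi)(0)=0$, $(U\phi)'_\xi(0)=1$ and $(U\theta)(0)=1$, $(U\theta)'_\xi(0)=0$, which are exactly the normalizations \eqref{eq:fs02}, so that $U\phi=s$ and $U\theta=c$.

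Finally I would propagate the boundary condition at $x=1$ through $U$: computing $(U\psi)(a)$ and $(U\psi)'_\xi(a)$ from \eqref{eq:bc} produces the stated relation \eqref{eq:betas} between $\beta$ and $\ti\beta$. With $U$ unitary, intertwining the two expressions, and matching both endpoints, the unitary equivalence $H=U^{-1}LU$ follows, and the singular Weyl solution $\psi$ of \eqref{eq:s_mN} is carried to the Weyl solution $y_{\ti\beta}$ of \eqref{eq:Mb}. Comparing the two representations $U\psi=U\phi-\ti m\,U\theta=s-\ti m\,c$ and $y_{\ti\beta}=s-M\,c$, and using that the boundary-value solution is unique up to a scalar, then gives $\ti m(z)=M(z)$.
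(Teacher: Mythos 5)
Your proposal is correct and follows essentially the same route as the paper: the same Liouville transform built from $\theta_0=\theta(0,\cdot)$, the same change-of-variables checks for $r\in L^1(0,a)$ and the isometry of $U$, the same intertwining identity, the same case-by-case evaluation of the indeterminate boundary limits at $\xi=0$ via the expansions of \cite[\S 3]{kt}, and the same propagation of the boundary condition at $x=1$ to obtain \eqref{eq:betas} and hence $\ti m=M$. (Your assignment $U\phi=s$, $U\theta=c$ agrees with the boundary values actually computed in the paper's proof and with what \eqref{eq:s_mN} and \eqref{eq:Mb} require.)
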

\begin{corollary}\label{cor:2.01}
Let $l\in [-1/2,1/2)$ and let $H_{(0,c)}$ be a self-adjoint operator associated with \eqref{eq:bes} on $(0,c)$ and subject to some separated
boundary conditions at the endpoints $0$ and $c$. Then the eigenvalues $\lambda_n(c)$ of $H_{(0,c)}$ satisfy 
\be\label{eq:eg_as}
\lim_{n\to \infty}\frac{n}{\sqrt{\lam_n(c)}}=\frac{c}{\pi}.
\ee 
\end{corollary}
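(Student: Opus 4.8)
The plan is to read off the eigenvalue asymptotics from the unitary equivalence of Lemma~\ref{lem:01}, transferring the problem to a Krein string whose one-term Weyl asymptotics are classical. First I would set $b=c$ in the construction of Section~\ref{ss:krein}. Since the limit \eqref{eq:eg_as} is invariant under adding a constant to $H_{(0,c)}$, we may assume $H_{(0,c)}\ge\eps\id$, so that $\theta_0=\theta(0,\cdot)$ is positive on $(0,c]$ and the transform \eqref{eq:r}--\eqref{eq:U} is well defined. By Lemma~\ref{lem:01}, the operator $H_{(0,c)}$ with the boundary condition induced by $\theta(\lam_0)$ at $0$ is unitarily equivalent to the Krein string operator $L=-\frac{1}{r(\xi)}\frac{d^2}{d\xi^2}$ on $(0,a)$, with $r(\xi)=\theta_0^4(x)$ and $a=\xi(c)=\int_0^c\theta_0^{-2}(t)\,dt$. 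In particular the two operators, both of which have purely discrete spectrum accumulating at $+\infty$, share the same eigenvalues.

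Next I would invoke the classical eigenvalue asymptotics for a regular string (see, e.g., \cite[\S 14]{KK2}): the eigenvalues $\mu_n$ of $-y''=\mu\,r(\xi)y$ on $(0,a)$ subject to separated boundary conditions satisfy $\sqrt{\mu_n}\sim n\pi/\ell$ as $n\to\infty$, where $\ell:=\int_0^a\sqrt{r(\xi)}\,d\xi$ is the optical (natural) length of the string; equivalently $n/\sqrt{\mu_n}\to\ell/\pi$. This is seen most transparently by the Liouville change of variable $s=\int_0^\xi\sqrt{r}$, which turns the string into a Schr\"odinger problem on the interval $(0,\ell)$ whose eigenvalues obey $\sqrt{\mu_n}\sim n\pi/\ell$.

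The constant then follows directly from \eqref{eq:r}: since $\sqrt{r(\xi)}\,d\xi=\theta_0^2(x)\cdot\theta_0^{-2}(x)\,dx=dx$, we obtain $\ell=\int_0^a\sqrt{r(\xi)}\,d\xi=\int_0^c dx=c$, whence $n/\sqrt{\lambda_n(c)}\to c/\pi$ for the boundary condition singled out by Lemma~\ref{lem:01}. To pass to arbitrary separated boundary conditions at $0$ and $c$, note that the unitary $U$ and hence the pair $(r,a)$ are unchanged, while different separated boundary conditions for $H_{(0,c)}$ correspond to different separated boundary conditions for $L$; for a string on a finite interval these produce interlacing eigenvalue sequences, so the leading asymptotics, and in particular \eqref{eq:eg_as}, are independent of the chosen boundary conditions.

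The main obstacle I expect is justifying the string asymptotics in the required generality: the density $r$ is only $L^1$ and is typically singular at $\xi=0$, since for $l\in(-\frac12,\frac12)$ one has $\theta_0(x)\sim\frac{x^{-l}}{2l+1}$, so $r\sim x^{-4l}$, while for $l=-\frac12$ there is an extra logarithmic factor. One must first check $\ell<\infty$, which follows from Cauchy--Schwarz, $\int_0^a\sqrt{r}\le\sqrt{a}\,(\int_0^a r)^{1/2}<\infty$, because $r\in L^1(0,a)$ as shown in Section~\ref{ss:krein}; and then verify that the Liouville transform, or a direct oscillation/counting argument in the spirit of Kac--Krein \cite{KK2}, still delivers the one-term asymptotics despite the endpoint singularity. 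Here the Neumann-type normalization $(U\theta)(0)=1$, $(U\theta)'_\xi(0)=0$ established before Lemma~\ref{lem:01} is exactly what makes $\xi=0$ a regular endpoint of the string, so that the standard asymptotics apply.
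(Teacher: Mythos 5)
Your proposal is correct and follows essentially the same route as the paper's proof: reduce to the boundary conditions \eqref{eq:bc} by eigenvalue interlacing, transfer to the Krein string via Lemma~\ref{lem:01}, invoke the classical Kac--Krein one-term asymptotics $n/\sqrt{\lam_n}\to\frac{1}{\pi}\int_0^a\sqrt{r}$, and observe that $\sqrt{r(\xi)}\,d\xi=dx$ gives the constant $c/\pi$. The additional checks you flag (finiteness of $\int_0^a\sqrt{r}$ and the singular density at $\xi=0$) are subsumed in the cited string result, which the paper simply quotes from \cite{KK2}.
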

\begin{proof}
Due to the interlacing properties of eigenvalues, it suffices to prove the claim in the case of boundary conditions \eqref{eq:bc}.
By Lemma \ref{lem:01}, the spectra of operators $H_{(0,c)}$ and $L_{(0,\xi(c))}$ coincide. However, by \cite[\S 11.8]{KK2}, the eigenvalues $\lam_n(\xi)\, (=\lam_n(c))$ of $L$ satisfy
\[
\lim_{n\to \infty}\frac{n}{\sqrt{\lam_n(\xi)}}=\frac{1}{\pi}\int_0^{\xi(c)} \sqrt{r(t)}dt=\frac{1}{\pi}\int_0^c \sqrt{\theta_0^4(t)}\frac{dt}{\theta_0^2(t)}=\frac{c}{\pi}.
\]
\end{proof}
\begin{remark}\label{rem:2.01}
(i) Using commutation methods (see, e.g., \cite{kst3}), \eqref{eq:eg_as} can be extended to the case $l\ge -1/2$.
However, these asymptotics were already obtained in \cite{kst} (see also \cite{ahm}) together with a detailed estimate for the
error term and hence there is no need to further pursue this approach here.

(ii) Note that \eqref{eq:eg_as} indicates that the solutions $\phi(z,x)$ and $\theta(z,x)$ are entire functions in $z$ of order $1/2$ and of finite type. Indeed, this has been shown in
\cite[Lem.~8.4]{kst2}).

Moreover, there is a standard iteration procedure to construct the solution $c(z,x)$ of \eqref{eq:sp02} (see, e.g., \cite[\S 2]{KK2}) and then, by using the Liouville transform, we obtain  the solution $\theta(z,x)$. As already mentioned in the introduction, it is difficult to construct $\theta(z,x)$ directly by iteration. Indeed, for the standard iteration scheme (see, e.g., \cite[Lem.~2.2]{kst} for a construction of $\phi(z,x)$) to converge it is required that $q$ must satisfy an additional assumption at $x=0$. For instance, $q\in L^1(0,\eps)$ is required if $l=0$. Finally, it is difficult to use a perturbative approach to get a detailed asymptotic for $c(z,x)$ and hence for $\theta(z,x)$ since \eqref{eq:sp02} needs to be considered as a perturbation of $-y''=z\xi^{\alpha}y$ and thus changes the coefficient containing the spectral parameter $z$.
\end{remark}

%%%%%%%%%%%%%%%%%%%%%%%%%%%%
%%%%%%%%%%%%%%%%%%%%%%%%%%%%
\subsection{Spectral asymptotics in the case $l\in [-1/2,1/2)$} \label{ss:krein_asymp}

Using the connection between the $m$-functions established in Lemma \ref{lem:01}, we are able to prove the following result:

\begin{theorem}\label{th:reg}
Let $l\in [-1/2,1/2)$, suppose $q$ satisfies \eqref{q:hyp}, and let $H$ be given by \eqref{eq:bes} together with the boundary condition \eqref{eq:bc}. Then the corresponding $m$-function \eqref{eq:s_mN} satisfies 
\be\label{eq:m_as}
\ti{m}(z)=-\frac{1}{m_l(z)}(1+o(1))\quad \text{as} \quad |z|\to \infty,
\ee
where $m_l$ is given by \eqref{eq:II.11}. 
The latter holds uniformly in any nonreal sector in $\C_+$. 
\end{theorem}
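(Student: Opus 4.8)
The plan is to exploit the unitary equivalence from Lemma~\ref{lem:01}, which identifies $\ti m(z)$ with the Krein string $m$-function $M(z)$ for the string whose mass density is $r(\xi)=\theta_0^4(x)$ in the $\xi$-variable \eqref{eq:r}. The key observation is that the high-energy asymptotics of the $m$-function of a regular Sturm--Liouville (string) problem depends only on the local behavior of the coefficients near the endpoint $\xi=0$; this is exactly the content of the one-term asymptotic formula of Bennewitz collected in Appendix~\ref{app:A} (Theorem~\ref{th:ben}). So first I would record that, by Theorem~\ref{th:ben} applied to the string $-\frac{1}{r(\xi)}\frac{d^2}{d\xi^2}$, the function $M(z)$ has a one-term asymptotic governed by the value $r(0+)$ and the smoothness of $r$ at $0$.

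The crux is therefore to understand $r(\xi)=\theta_0(x)^4$ as $\xi\to 0+$, equivalently as $x\to 0+$. From the normalization \eqref{eq:fs01} we have $\theta_0(x)\sim \frac{x^{-l}}{2l+1}$ for $l>-\tfrac12$ and $\theta_0(x)\sim -x^{1/2}\log x$ for $l=-\tfrac12$, so $r(\xi)=\theta_0^4$ blows up (for $l>-\tfrac12$) or vanishes (for $l=-\tfrac12$) at the origin. The change of variables \eqref{eq:r} gives $d\xi=\theta_0^{-2}\,dx$, so I would compute $\xi(x)$ explicitly at leading order: for $l>-\tfrac12$ one gets $\xi \sim \frac{(2l+1)^2}{2l+1}\, x^{2l+1}$ up to a constant, i.e.\ $\xi$ behaves like a fixed power of $x$, and inverting this power law yields $r$ as a power of $\xi$ near $0$. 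Thus the string has a density with a regularly-varying (power-type) singularity $r(\xi)\sim \text{const}\cdot \xi^{-\alpha}$ at the endpoint, and Bennewitz's theorem (or the Kac--Krein string asymptotics, cf.\ \cite{KK2}) converts this power-law density into the corresponding power-law asymptotics $M(z)\sim \text{const}\cdot(-z)^{-(l+1/2)}$. Matching the constant to $C_l$ from \eqref{eq:II.11} is then a bookkeeping exercise: one tracks the numerical factors through $\theta_0(x)\to r(\xi)\to M(z)$ and compares with $-1/m_l(z)$.

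The main obstacle I expect is twofold. First, the case $l=-\tfrac12$ is genuinely different: here $\theta_0$ carries a logarithmic factor, so $r(\xi)\sim \text{const}\cdot \xi(\log \xi)^{\text{something}}$ is not a pure power, and the relevant asymptotic formula produces the logarithmic term $z^{l+1/2}\log(-z)$ in \eqref{eq:II.11}. This borderline case, which sits exactly at the transition $l+\tfrac12\in\N_0$, must be handled separately and requires the version of the one-term asymptotics that accommodates slowly-varying (logarithmic) corrections to the density. Second, I must verify that the subleading behavior of $\theta_0$ (encoded in $\ti\theta(0)=1$ and $\ti\theta\in W^{1,1}[0,1]$) does not affect the leading asymptotic constant — i.e.\ that replacing $\theta_0$ by its leading power is legitimate inside Bennewitz's formula. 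This is precisely where the regularity hypotheses in \eqref{q:hyp} enter, since they guarantee $\ti\theta(0+)=1$ with the required integrability, making the density $r$ asymptotically a clean power (times a slowly varying factor). Once the density's endpoint behavior is pinned down, the conclusion \eqref{eq:m_as} follows by invoking the endpoint-asymptotics theorem and identifying the constant with $-1/m_l$; the uniformity in nonreal sectors is inherited from the uniformity statement in Theorem~\ref{th:ben}.
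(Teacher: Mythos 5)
Your proposal follows essentially the same route as the paper: invoke Lemma~\ref{lem:01} to identify $\ti m$ with the Krein string $m$-function $M$, compute the regularly varying (power-law, resp.\ power-times-log for $l=-\tfrac12$) behavior of the integrated density $R(\xi)=\int_0^\xi r$ at the endpoint using the normalization $\ti\theta(0)=1$, and feed this into Bennewitz's one-term asymptotic formula (Theorem~\ref{th:ben}), with Lemma~\ref{lem:a.asymp} justifying the replacement of $R$ by its leading-order model and a separate treatment of the logarithmic case $l=-\tfrac12$. The only slight imprecision is the phrase that the asymptotics is ``governed by the value $r(0+)$'' (which is $0$ or $\infty$ here; the relevant input is the limit order of $R$), but you correct this immediately, so the argument is sound and matches the paper's proof.
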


\begin{proof}
{\em (i) The case $l\in(-1/2,1/2)$.}
By \eqref{eq:fs01} and Lemma A.2 from \cite{kt}, we get 
\[
\xi(x)=(2l+1)x^{2l+1}\ti{\xi}(x),\quad
 \ti{\xi}\in W^{1,1}[0,1],\quad \ti{\xi}(0)=1.
\]
Hence, $\xi$ as a function of $x$ has limit order $2l+1$ and by Lemma \ref{lem:a.asymp}, the inverse $x=x(\xi)$ of $\xi(x)$ has  the following asymptotic behavior:
\[
x(\xi)=\Big(\frac{\xi}{2l+1}\Big)^{\frac{1}{2l+1}}(1+o(1))\quad \text{as}\quad \xi\to 0.
\]
Next, using \cite[Lem.~A.2]{kt}, we obtain
\[
R(\xi)=\int_0^\xi r(\xi)d\xi=\int_0^x\theta_0^2(t)dt=\frac{x^{1-2l}}{(2l+1)^2(1-2l)}(1+o(1))=\frac{\xi^{\alpha}}{A_\alpha}(1+o(1)),
\]
where
\[
\alpha=\frac{1-2l}{1+2l},\quad
A_\alpha:=(1-2l)(1+2l)^{\frac{2l+3}{2l+1}}.
\]
Therefore, by Theorem \ref{th:ben},  the $m$-function \eqref{eq:Mb} corresponding to the operator $L$ satisfies
\[
M(z)=K_{\frac{1}{1+\alpha}}A_{\alpha}^{\frac{1}{1+\alpha}} (-z)^{-\frac{1}{1+\alpha}}(1+o(1))\quad \text{as} \quad |z|\to \infty; \quad K_\nu=\frac{\nu^{1-\nu}\Gamma(\nu)}{(1-\nu)^\nu \Gamma(1-\nu)}.
\]
The latter holds uniformly in any nonreal sector in $\C_+$. Further, noting that
\[
\frac{1}{1+\alpha}=l+\frac{1}{2},
\]
after straightforward calculations we get
\be\label{eq:k_l}
K_{l+\frac{1}{2}}A_{\alpha}^{l+\frac{1}{2}}=2^{2l}(2l+1)^2\frac{\Gamma(1/2+l)}{\Gamma(1/2-l)}=\frac{\sin(\pi(l+\frac{1}{2}))}{C_l^2}
\ee
and Lemma \ref{lem:01} completes the proof.

{\em (ii) The case $l=-\frac{1}{2}$.}
By \eqref{eq:fs01}, $\theta_0(x)=\sqrt{x}\log(x)\tilde{\theta}_0(x)$, where $\tilde{\theta}_0\in W^{1,1}[0,1]$ and $\tilde{\theta}_0(0)=1$. Hence by \eqref{eq:r} we get
\be\label{eq:xi12}
\xi=G(x):=\int_0^x \frac{dt}{t\log^2(t) \ti\theta_0^2(t)},\quad R(\xi)=P(x):=\int_0^x t\log^2(t) \ti\theta_0^2(t)dt.
\ee
Set
\[
G_0(x):=\int_0^x \frac{dt}{t\log^2(t) }=\frac{-1}{\log(x)},\:\:
P_0(x):=\int_0^x t\log^2(t) dt=\frac{x^2}{2}(\log^2(x)-\log(x)+\frac{1}{2}).
\]
Note that $P$, $P_0$, $G$, and $G_0$ are absolutely continuous and strictly increasing. Let $p$, $p_0$, $g$, and $g_0$ be the corresponding inverses.
 
Firstly, since $\tilde{\theta_0}\in W^{1,1}$ and $\tilde{\theta_0}(0)=1$, we observe that
\be\label{eq:g_as}
(G\circ g_0)(x)=\int_0^{g_0(x)}\frac{dt}{t\log^2(t) \ti\theta_0^2(t)} =\int_0^x \frac{dt}{(\ti\theta_0^2 \circ g_0)(t)}\sim x,\quad x\to 0.
\ee
Furthermore, by L'H\^opital's rule, $P\sim P_0$ and $G\sim G_0$ as $x\to 0$. Moreover,
\[
\frac{P_0(x+o(x))}{P_0(x)}=(1+o(1))^2\frac{\log^2(x)-\log(x)(1+o(1))+1/2+o(1)}{\log^2(x)-\log(x)+1/2}\sim 1, \quad x\to 0,
\] 
and hence 
\be\label{eq:f_as}
P(x+o(x))\sim P(x),\quad x\to 0.
\ee
Noting that $R(\xi)=(P\circ g)(\xi)$ and $R_0(\xi)=(P_0\circ g_0)(\xi)$ and using \eqref{eq:g_as} and \eqref{eq:f_as}, we get
\be\label{eq:R_as}
\lim_{\xi\to 0}\frac{R(\xi)}{R_0(\xi)}=\lim_{\xi\to 0}\frac{(P\circ g)(\xi)}{(P_0\circ g_0)(\xi)}=\lim_{x\to 0}\frac{(P\circ g\circ G_0)(x)}{P_0(x)}=1.
\ee
The function $R_0$, given by 
\be
R_0(\xi)=(P_0\circ g_0)(\xi)=\frac{2+2\xi+\xi^2}{4\xi^2\E^{2/\xi}}\sim \ti{R}_0(\xi):=\frac{1}{2\xi^2\E^{2/\xi}},\quad \xi\to 0,
\ee
has limit order $\infty$ at $0$ and hence, by \eqref{eq:R_as}, so has $R$. 
By Theorem \ref{th:ben}, the asymptotic behavior of $m(z)$ as $z\to\infty$ is determined by the function $f=F^{-1}$, where $F(\xi):=\frac{1}{ \xi R(\xi)}$. 
By Lemma  \ref{lem:a.asymp},
the inverse function $f=F^{-1}$ is asymptotically equal to the inverse $\ti{f}_0$ of 
\[
\ti{F}_0(\xi):=\frac{1}{ \xi \ti{R}_0(\xi)} =2\xi\E^{2/\xi}.
\]
Note that the function $\ti{F}_0(\xi)$ is a bijection from $(0,2)$ to $(4\E,\infty)$ whose inverse is given by
\[
\ti{f}_0(x)=-\frac{2}{W_m(-\frac{4}{x})},
\]
where $W_m$ is the second branch of the Lambert W-function \cite[\S4.13]{dlmf} which satisfies
\be\label{eq:w_as}
-W_m(-\frac{1}{x})=\log(x)+\log(\log(x))+O\big(\frac{\log(\log(x))}{\log(x)}\big), \quad x\to +\infty
\ee
Finally, applying Theorem \ref{th:ben}, we  get
\[
m(\rho\E^{\I \varphi})=\ti{f}_0(\rho)(1+o(1))=\frac{2}{\log(\rho)}(1+o(1)),\quad \rho\to \infty.
\]
Lemma \ref{lem:01} completes the proof. 
\end{proof}

Observe that the $m$-functions \eqref{eq:s_m} and \eqref{eq:s_mN} are connected by $m(z)=-1/\ti{m}(z)$. 
Therefore, the asymptotic formula \eqref{eq:m_as} proves Theorem \ref{th:main} in the case $l\in [-1/2,1/2)$. 

\subsection{Spectral asymptotics in the case $l\ge\frac{1}{2}$.}

Finally the general case can be reduced to the previous one as in the proof of Corollary~3.11 from \cite{kst3}.
In fact, given $l=l_0+k$ with $l_0\in[-\frac{1}{2},\frac{1}{2})$, $k\in\N$, we can perform a single commutation step to
obtain an operator $\check{H}$ associated with $l-1$ and a new potential $\check{q}$ in the same class.
Now taking into account that $\check{\phi}(z,x)= (2l+1) x^l (1+o(1))$ does not satisfy our normalization \eqref{eq:fs01}
\cite[Thm.~3.7]{kst3} implies $m(z) = (2l+1)^2 (z-\lam) \check{m}(z)$ and the claim follows by induction on $k$.

\begin{remark}\label{rem:2.02}
As pointed out in our introduction the solution $\theta(z,x)$ is only unique up to a term $f(z) \phi(z,x)$.
Following \cite{kt} we will call $\theta(z,x)$ {\em a Frobenius solution} if
\be
\lim_{x\to0}x^{-(l+1)}\frac{\partial^{(n_l+1)}}{\partial z^{(n_l+1)}}\theta(z,x)\equiv 0,
\ee
where $n_l:=\floor{l+1/2}$. This will fix $f(z)$ up to a polynomial of degree $n_l$ and ensure that
$m(z)$ is in the generalized Nevalinna class $N^\infty_{\gk_l}$, where $\gk_l=\floor{\frac{l}{2}+\frac{3}{4}}$ (we refer to \cite{kt} for further details).

It is easy to see that $\theta$ in Section \ref{ss:krein} is a Frobenius solution. Moreover, the  property of a singular solution $\theta$ to be Frobenius is invariant under a single commutation. Therefore, it follows from Remark \ref{rem:2.01} that a Frobenius solution $\theta$ is an entire function in $z$ of growth order $1/2$ and of finite type. 
\end{remark}

%%%%%%%%%%%%%%%%%%%%%%%%%%%%%%%%%%%%
%%%%%%%%%%%%%%%%%%%%%%%%%%%%%%%%%%%%

\section{Uniqueness results for the inverse spectral problem}\label{sec:BM}

As it was discussed in Remark \ref{rem:2.02}, if we choose $\theta(z,x)$ to be a Frobenius solution, then the $m(z)$ is in the generalized Nevalinna class $N^\infty_{\gk_l}$. This fixes $m(z)$ up to a polynomial of degree $n_l$
and thus $l$ can be read off from the asymptotics of $m(z)$ in this case.

This observation enables us to show the following improvement of Theorem 8.5 from \cite{kst2}.
\begin{theorem}\label{th:uniq}
Let $H(l_j,q_j)$, $j\in \{1,2\}$, be the operators defined in $L^2(0,b_j)$ by \eqref{eq:bes}. Assume that $q_j$ satisfies \eqref{q:hyp}, $l_j\in[-\frac{1}{2},\infty)$, and $0<b_j\le +\infty$. Suppose $m_j$ are defined via solutions $\theta_j(z,x)$, $\phi_j(z,x)$ which are entire functions in $z$ of order less than $1$.

If for some $c\in (0,\min\{b_1,b_2\})$ there is a real entire function $g$ 
such that for every $\eps>0$
\be\label{eq:m=m}
m_1(z)-m_2(z)=g(z) +O(\E^{-2(c-\eps)|\im(\sqrt{z})|}) 
\ee 
as $z\to \infty$ along some nonreal ray,  
then $l_1=l_2$ and $q_1(x)=q_2(x)$ for a.e. $x\in (0,c)$.
\end{theorem}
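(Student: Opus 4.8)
The plan is to prove this local Borg–Marchenko uniqueness result by combining the high-energy asymptotics of Theorem~\ref{th:main} with a classical Weyl-type argument built on the analyticity and growth properties of the solutions. First I would use the asymptotic formula from Theorem~\ref{th:main} to read off the angular momentum. Since we are free to take $\theta_j$ to be Frobenius solutions (Remark~\ref{rem:2.02}), each $m_j(z) - g_j(z)$ behaves like $m_{l_j}(z)(1+o(1))$, and the dominant terms $m_{l_1}$ and $m_{l_2}$ have genuinely different growth rates $(-z)^{l_j+1/2}$ (with logarithmic corrections at integer points). If $l_1 \neq l_2$, then $m_1(z) - m_2(z)$ would grow like a nontrivial power of $z$ that cannot be absorbed into the real entire function $g(z)$ together with the exponentially small remainder in \eqref{eq:m=m}; hence \eqref{eq:m=m} forces $l_1 = l_2 =: l$. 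This reduces the problem to showing that equal angular momenta and the estimate \eqref{eq:m=m} imply $q_1 = q_2$ on $(0,c)$.

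For the second and main step I would set up the standard local Borg–Marchenko machinery. With $l_1 = l_2 = l$, both operators share the same normalization \eqref{eq:fs01}, so the solutions $\phi_j, \theta_j$ are comparable. The key object is the difference $m_1 - m_2 - g$, which by hypothesis is $O(\E^{-2(c-\eps)|\im\sqrt{z}|})$ along a nonreal ray. I would form the Wronskian-type quantity $W(\psi_1(z),\psi_2(z))$ of the Weyl solutions $\psi_j = \theta_j + m_j \phi_j$ and track its growth. Using the estimates on $\phi(z,x)$ and $\psi(z,x)$ from Appendix~\ref{app:B} (in particular that $\phi(z,x)$ grows like $\E^{|\im\sqrt{z}|\,x}$ and $\psi(z,x)$ decays correspondingly), one shows that the function $z \mapsto m_1(z) - m_2(z) - g(z)$, multiplied by the appropriate entire factor built from $\phi_1,\phi_2$ evaluated at $x < c$, is entire of order less than $1$ and is $O(\E^{-2(c-x-\eps)|\im\sqrt{z}|})$. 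A Phragmén–Lindelöf argument across the nonreal ray then upgrades this to a bound on the whole sector, and combined with the order-$1/2$, finite-type growth of the solutions (Remark~\ref{rem:2.01}(ii)), forces the relevant Wronskian to vanish identically for $x < c$.

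Concretely, the heart of the argument is the representation
\be
m_1(z) - m_2(z) = \frac{W_x(\psi_1,\theta_2) - W_x(\theta_1,\psi_2)}{\phi_1(z,x)\phi_2(z,x)} + (\text{correction from }\theta_j)
\ee
evaluated for fixed $x \in (0,c)$, which converts information about the difference of $m$-functions at high energy into information about the solutions on $(0,x)$. Exploiting that $\phi_j(z,x)^2$ grows like $\E^{2|\im\sqrt{z}|\,x}$ while the left side decays like $\E^{-2(c-\eps)|\im\sqrt{z}|}$, the numerator decays like $\E^{-2(c-x-\eps)|\im\sqrt{z}|}$; letting $\eps \to 0$ and using that the numerator is entire of order less than $1$, Phragmén–Lindelöf forces it to vanish for every $x<c$. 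Vanishing of these Wronskians for all such $x$ means $\phi_1(z,\cdot)$ and $\phi_2(z,\cdot)$ solve the same equation with the same initial data near $0$, whence $q_1 = q_2$ a.e.\ on $(0,c)$ by the standard uniqueness of the Sturm–Liouville inverse problem from two Weyl-solution data.

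The step I expect to be the main obstacle is controlling the growth of the solutions and the difference $m_1 - m_2 - g$ \emph{uniformly} near the singular endpoint $x=0$, so that the Phragmén–Lindelöf principle can be applied with the sharp exponential rate $\E^{-2(c-x)|\im\sqrt{z}|}$. Unlike the regular case, here $\phi$ and $\theta$ have the singular behavior \eqref{eq:fs01}, and the presence of the free entire function $g(z)$ (absorbing the ambiguity in $\theta_j$ and the possibly different normalizations) must be handled carefully: one must verify that $g$ cannot conceal a genuine difference between the potentials. Making the exponential-type bounds precise as $x \to 0^+$, and ensuring the entire-order-less-than-$1$ hypothesis is strong enough to run the Phragmén–Lindelöf argument, is where the delicate analysis lies; this is precisely where the estimates collected in Appendix~\ref{app:B} are essential.
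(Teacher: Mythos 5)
Your overall strategy---first pin down $l_1=l_2$ from the high-energy asymptotics of Theorem~\ref{th:main}, then run a local Borg--Marchenko argument---matches the paper's, but the first step contains a genuine gap. You assert that if $l_1\neq l_2$ then the difference $m_1(z)-m_2(z)$ ``would grow like a nontrivial power of $z$ that cannot be absorbed into the real entire function $g(z)$.'' As stated this does not follow: an entire function can exhibit essentially arbitrary growth along a \emph{single} nonreal ray, and the hypothesis \eqref{eq:m=m} is only assumed along one such ray. So nothing yet prevents $g$ from mimicking $m_{l_1}(z)-m_{l_2}(z)$ there. The paper closes exactly this hole by first normalizing the $\theta_j$ to be Frobenius solutions, so that each $m_j$ lies in the generalized Nevanlinna class $\mathcal{N}_{\kappa_j}^\infty$ and is therefore bounded by a polynomial of degree $\le\max\{\kappa_1,\kappa_2\}+1$ in any nonreal sector; since $g$ is real entire, \eqref{eq:m=m} then gives polynomial bounds on the ray \emph{and} its complex conjugate, and the Phragm\'en--Lindel\"of theorem forces $g$ to be a polynomial. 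Only after $g$ is known to be a polynomial (hence absorbable into the $\theta_j$ with the larger $l_j$, reducing to $g\equiv 0$) can one legitimately read off $l_1=l_2$ from Theorem~\ref{th:main}, because $m_{l}(z)$ involves non-integer powers or logarithms and so cannot differ from a polynomial by $o(1)$ unless the leading terms match. Your plan never constrains $g$ beyond ``real entire,'' and your own closing caveat (``one must verify that $g$ cannot conceal a genuine difference'') is precisely the missing argument rather than a technicality.

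For the second step, once $l_1=l_2$ is established the paper does not redo the Wronskian/Phragm\'en--Lindel\"of machinery you sketch: it observes that $\phi_1(z,x)$ and $\phi_2(z,x)$ now have the same asymptotic normalization and invokes the already-proved local Borg--Marchenko theorem (Theorem~8.5 of \cite{kst2}) verbatim. Your outline of that machinery is in the right spirit (it is essentially how \cite{kst2} proves its Theorem~8.5), but it remains a sketch whose hard points---the uniform exponential-type estimates on $\phi_j$ and $\psi_j$ near the singular endpoint and the order bound needed for Phragm\'en--Lindel\"of---are exactly the content of the cited result. So even granting that part, the proposal as written does not constitute a complete proof; the substantive defect is the unjustified elimination of $g$ in step one.
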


\begin{proof}
Without loss of generality we can assume that $\theta_j$ are Frobenius solutions such that the $m$-functions $m_j$ are generalized Nevanlinna functions belonging to the class $\mathcal{N}_{\kappa_j}^\infty$,
$\kappa_j=\floor{\frac{l_j}{2}+\frac{3}{4}}$. Moreover, \eqref{eq:m=m} implies that $g$ is bounded by a polynomial of degree at most $n:=\max\{\kappa_1,\kappa_2\}+1$ on two rays in $\C$ and hence, by the Phragm\'en--Lindel\"of theorem, $g$ is a polynomial of degree at most $n$. 

Therefore, without loss of generality we can absorb $g(z)$ in the $\theta_j$ with the larger $l_j$ and assume $g(z)=0$.
But then we can read off $l_j$ from the asymptotic behavior of $m_j(z)$ implying $l_1=l_2$. But this shows $\phi_1(z,x)$ and
$\phi_2(z,x)$ have the same asymptotic behavior and the result follows from \cite[Thm.~8.5]{kst2}.
\end{proof}

\begin{corollary}\label{cor:41a}
Let $H_j$, $j\in \{1,2\}$, satisfy the same assumptions as in Theorem \ref{th:uniq}. Let also the singular $m$-functions $m_j$ be defined via Frobenius solutions.  

If for some $c\in (0,\min\{b_1,b_2\})$ there is a real entire function $g$
such that \eqref{eq:m=m} holds for every $\eps>0$ 
as $z\to \infty$ along some nonreal ray,  
then $l_1=l_2$ and $q_1(x)=q_2(x)$ for a.a. $x\in (0,c)$.
\end{corollary}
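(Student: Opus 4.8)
The plan is to deduce Corollary~\ref{cor:41a} directly from Theorem~\ref{th:uniq} by checking that its hypotheses are automatically satisfied once we restrict to Frobenius solutions. The only gap between the two statements is that Theorem~\ref{th:uniq} imposes the extra requirement that $\theta_j(z,x)$ and $\phi_j(z,x)$ be entire functions in $z$ of order less than $1$, whereas in Corollary~\ref{cor:41a} we merely assume that the $m_j$ are defined via Frobenius solutions. So the whole task reduces to verifying that a Frobenius solution automatically has the required growth.

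First I would recall the normalization \eqref{eq:fs01}: the solution $\phi(z,x)=x^{l+1}\ti\phi(z,x)$ is uniquely fixed, so there is nothing to prove for $\phi_j$ beyond knowing its growth order in $z$. For this I would invoke Remark~\ref{rem:2.01}(ii), which records that the eigenvalue asymptotics \eqref{eq:eg_as} force $\phi(z,x)$ (and likewise $c(z,\xi)$) to be entire in $z$ of growth order $1/2$ and finite type. Thus $\phi_j$ is entire of order $1/2<1$, as needed. The corresponding statement for $\theta_j$ is precisely the content of the last sentence of Remark~\ref{rem:2.02}: a Frobenius solution $\theta(z,x)$ is entire in $z$ of growth order $1/2$ and finite type, because the Frobenius property is preserved under a single commutation step and the base case $l\in[-\tfrac12,\tfrac12)$ follows from the Liouville transform construction of Section~\ref{ss:krein} together with Remark~\ref{rem:2.01}.

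With both $\phi_j$ and $\theta_j$ shown to be entire of order $1/2$, in particular of order strictly less than $1$, every hypothesis of Theorem~\ref{th:uniq} is met. The estimate \eqref{eq:m=m} along a nonreal ray for every $\eps>0$ is assumed verbatim in the corollary, so I would then simply apply Theorem~\ref{th:uniq} to conclude $l_1=l_2$ and $q_1(x)=q_2(x)$ for almost every $x\in(0,c)$.

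The only point requiring care — and the step I expect to be the main (if mild) obstacle — is making the growth-order claim for the Frobenius $\theta_j$ fully airtight, since it is asserted rather than proved in Remark~\ref{rem:2.02}. Concretely one must confirm that (a) the base-case Frobenius $\theta$ constructed via the Liouville transform inherits order $1/2$ and finite type from $c(z,\xi)$, and (b) a single commutation step multiplies $\theta$ by at most an affine factor in $z$ (cf.\ the relation $m(z)=(2l+1)^2(z-\lam)\check m(z)$ used at the end of Section~\ref{ss:krein}), so the order is unchanged under the induction on $k$ that raises $l$ into the general range. Once this bookkeeping is in place the corollary is immediate, so the argument is genuinely a short reduction rather than a new proof.
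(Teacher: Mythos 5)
Your argument is exactly the paper's proof: the corollary is reduced to Theorem \ref{th:uniq} by observing, via Remark \ref{rem:2.02} (together with Remark \ref{rem:2.01}(ii) for $\phi$), that Frobenius solutions are entire of order $1/2<1$ in $z$. Your extra discussion of why the remark holds is sound but not needed beyond what the paper itself invokes.
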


\begin{proof}
It suffices to notice that according to Remark \ref{rem:2.02} Frobenius solutions are of order 1/2 and then to apply Theorem \ref{th:uniq}.
\end{proof}

Since $m(z)$ is determined by its spectral measure $\rho(\lam)$ up to an entire function (\cite[Thm.~4.1]{kst2}) we also obtain:

\begin{corollary}\label{cor:41}
Let $H_j$, $j\in \{1,2\}$, satisfy the same assumptions as in Theorem \ref{th:uniq}. Let also $m_1$ and $m_2$ be some singular $m$-functions and $\rho_1$, $\rho_2$ be the corresponding spectral functions. 

If there is a real entire function $g$ such that
\be
m_1(z)=m_2(z)+g(z),
\ee
or equivalently 
$\rho_1=\rho_2$, then $l_1=l_2$, $b_1=b_2$ and $q_1(x)=q_2(x)$ for a.e. $x\in (0,b_1)$.
\end{corollary}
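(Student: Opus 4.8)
My plan is to reduce the statement to Corollary~\ref{cor:41a} on the common interval and then to deduce $b_1=b_2$ from the eigenfunction expansion. First, by \cite[Thm.~4.1]{kst2} the spectral measure determines the singular $m$-function up to a real entire summand, so the two hypotheses displayed in the corollary are genuinely equivalent, and I may work with $m_1(z)=m_2(z)+g(z)$ for a real entire $g$. Since passing to Frobenius solutions changes each $m_j$ only by a real entire function, I may assume without loss of generality (after modifying $g$) that the $m_j$ are defined via Frobenius solutions. Then the hypothesis says exactly that \eqref{eq:m=m} holds with vanishing remainder term, hence for \emph{every} $c\in(0,\min\{b_1,b_2\})$ and every $\eps>0$. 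Corollary~\ref{cor:41a} then yields $l_1=l_2=:l$ and $q_1=q_2$ a.e.\ on $(0,c)$ for each such $c$; letting $c\uparrow\min\{b_1,b_2\}$ gives $q_1=q_2$ a.e.\ on $(0,\min\{b_1,b_2\})$.

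It remains to prove $b_1=b_2$, and here I would use that $\rho_1=\rho_2=:\rho$. Since $l_1=l_2$ and $q_1=q_2$ near $0$, the regular solution $\phi(z,x)$ agrees for both problems, so by singular Weyl--Titchmarsh--Kodaira theory \cite{gz}, \cite{kst}, \cite{kst2} the generalized Fourier transform
\be
(U_j f)(\lam):=\int_0^{b_j} f(x)\,\phi(\lam,x)\,dx
\ee
is unitary from $L^2(0,b_j)$ onto $L^2(\R,d\rho)$ for $j\in\{1,2\}$. I would argue by contradiction: assume $b_1<b_2$ and choose $v\in L^2(b_1,b_2)$, $v\neq 0$, extended by zero to $(0,b_2)$. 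By surjectivity of $U_1$ there is $w\in L^2(0,b_1)$ with $U_1 w=U_2 v$, and the function $u\in L^2(0,b_2)$ equal to $w$ on $(0,b_1)$ and to $-v$ on $(b_1,b_2)$ then satisfies $U_2 u=U_1 w-U_2 v=0$. The Parseval identity for $H_2$ forces $u=0$, whence $v=0$, a contradiction. Thus $b_1=b_2$, and together with the first paragraph this yields $q_1=q_2$ a.e.\ on $(0,b_1)$.

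The first paragraph is routine bookkeeping once the equivalence of the hypotheses is recorded. The real content, and the step I expect to be the main obstacle, is the second paragraph: comparing two operators that live on different intervals through a single spectral measure. The decisive input is the \emph{completeness} and simplicity of the eigenfunction expansion in this singular setting, i.e.\ that $U_j$ maps \emph{onto} $L^2(\R,d\rho)$; this is precisely what the singular Weyl--Titchmarsh--Kodaira machinery provides. In the special case where both operators have purely discrete spectrum one can avoid this and argue more directly: $\rho_1=\rho_2$ forces the eigenvalues to coincide, and then $b_1=b_2$ is immediate from the eigenvalue asymptotics \eqref{eq:eg_as} of Corollary~\ref{cor:2.01} (extended to all $l\ge-\tfrac12$ by Remark~\ref{rem:2.01}(i)).
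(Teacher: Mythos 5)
Your proposal is correct, and its core reduction coincides with the paper's: pass to Frobenius solutions (which changes each $m_j$ only by a real entire function) and invoke Corollary~\ref{cor:41a}. You streamline one step --- the paper first observes that the Frobenius-normalized $m_j$ lie in a generalized Nevanlinna class so that $g$ must be a polynomial, and then absorbs it into a $\theta_j$, whereas you simply note that $m_1-m_2=g$ makes \eqref{eq:m=m} hold with identically vanishing remainder for every $c$, which is all Corollary~\ref{cor:41a} needs. The genuine added value of your write-up is the second paragraph: the paper's proof ends with ``the result follows from the previous corollary,'' but Corollary~\ref{cor:41a} only yields $l_1=l_2$ and $q_1=q_2$ on $(0,c)$ for $c<\min\{b_1,b_2\}$, and says nothing about $b_1=b_2$. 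Your completeness argument --- using that the generalized Fourier transforms $U_j$ are unitary onto the \emph{same} space $L^2(\R,d\rho)$, so a nonzero $v$ supported in $(b_1,b_2)$ would produce a nonzero element of $\ker U_2$ --- correctly fills this gap, with the standard inputs (unitarity of the singular Weyl--Titchmarsh transform from \cite{kst2}, and the fact that $\phi_1(\lam,\cdot)=\phi_2(\lam,\cdot)$ on $(0,b_1)$ once $l_1=l_2$ and $q_1=q_2$ there). Your closing remark on the purely discrete case via the eigenvalue asymptotics \eqref{eq:eg_as} is a nice, more elementary alternative for that special situation, though it is not needed for the general statement.
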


\begin{proof}
Without loss of generality we can assume that $m_j(z)$ are defined via Frobenius solutions (this will only change $g(z)$).
But then $m_j(z)$ are generalized Nevanlinna functions and $g(z)$ can be at most a polynomial. Moreover,
we can even absorb $g(z)$ in the $\theta_j$ with the larger $l_j$ and assume $g(z)=0$ without loss of generality.
Hence the result follows from the previous corollary.
\end{proof}

Using a different approach based on the theory of de Branges spaces, Corollary \ref{cor:41}
was established by Eckhardt \cite{eck}.

As another consequence we also obtain a generalization of item (ii) of Theorem 2.8  from \cite{kst}.

\begin{corollary}\label{cor:42}
Suppose $H$ has purely discrete spectrum $\sig(H)=\{\lam_n \}$. Then the eigenvalues
$\lam_n$ together with the norming constants
\be
\gamma_n^{-1} = \int_0^b \phi(\lam_n,x)^2 dx
\ee
uniquely determine $q$ and $l$.
\end{corollary}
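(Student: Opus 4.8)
The plan is to recognize that, for a purely discrete spectrum, the spectral measure $d\rho$ in the representation \eqref{eq:int_rep} is completely encoded by the pair $\{\lam_n\}$, $\{\gam_n\}$, and then to invoke Corollary~\ref{cor:41}. Concretely, when $\sig(H)=\{\lam_n\}$ is discrete the generalized Fourier transform
\[
(Uf)(\lam):=\int_0^b f(x)\,\phi(\lam,x)\,dx
\]
diagonalizes $H$, the measure $d\rho$ is purely atomic with $\supp d\rho=\{\lam_n\}$, and the weight attached to $\lam_n$ is the jump of the spectral function there. I would first make the eigenfunction normalization explicit: since $\phi(\lam,\cdot)$ is the unique solution obeying the boundary condition at $0$ built into \eqref{eq:fs01}, the normalized eigenfunctions $u_n:=\phi(\lam_n,\cdot)/\|\phi(\lam_n,\cdot)\|$ form an orthonormal basis of $L^2(0,b)$.

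Next I would run Parseval for $U$. Writing $(Uf)(\lam_n)=\|\phi(\lam_n,\cdot)\|\,\spr{f}{u_n}$ and inserting this into $\int_0^b|f|^2\,dx=\sum_n|\spr{f}{u_n}|^2$ identifies the point mass of $d\rho$ at $\lam_n$ as $\|\phi(\lam_n,\cdot)\|^{-2}$, which is exactly the norming constant with $\gam_n^{-1}=\int_0^b\phi(\lam_n,x)^2\,dx$. Thus
\[
d\rho(\lam)=\sum_n\gam_n\,\delta_{\lam_n}(\lam),
\]
so knowing the eigenvalues and the norming constants is the same as knowing $\rho$.

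With $\rho$ fixed, the conclusion is immediate from the material already in place: I would choose the Frobenius solutions, which by Remark~\ref{rem:2.02} are entire of order $1/2$ and hence of order less than $1$, so that the hypotheses of Corollary~\ref{cor:41} hold. Two operators sharing the same eigenvalues and norming constants then share the same $\rho$, and Corollary~\ref{cor:41} forces $l_1=l_2$, $b_1=b_2$, and $q_1=q_2$ a.e., which is the desired uniqueness. The only delicate point, and the one I would take care with, is the bookkeeping in the Parseval step: one must check that the jump of $\rho$ at $\lam_n$ equals the norming constant as normalized in the statement (rather than its reciprocal or a constant multiple), which amounts to matching the normalization of $\phi$ in \eqref{eq:fs01} with the transform $U$ underlying \eqref{eq:int_rep}. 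Once this is pinned down, no further analysis is needed.
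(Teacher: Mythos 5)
Your proposal is correct and follows the same route as the paper: identify the spectral measure as the purely atomic measure $d\rho=\sum_n\gamma_n\,\delta_{\lambda_n}$, so that eigenvalues plus norming constants determine $\rho$, and then invoke Corollary~\ref{cor:41}. The paper states this in one sentence; your Parseval computation just makes explicit the identification of the jump heights with the norming constants.
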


\begin{proof}
This is immediate since in this case the spectrum is purely discrete and the spectral measure is
uniquely determined by its jumps together with the jump heights $\rho(\{\lam_n\}) = \gamma_n$.
\end{proof}
%%%%%%%%%%%%%%%%%%%%%%%%%%%%%%%%%%%%
%%%%%%%%%%%%%%%%%%%%%%%%%%%%%%%%%%%%

\section{Quantum scattering theory}\label{sec:QST}

In this section we want to look at the case where in addition to \eqref{q:hyp} the potential has the form
\be\label{q:hyp2}
q(x) = \frac{\gam}{x} + \ti{q}(x), \qquad \ti{q}(x) \in L^1(1,\infty), \ \gam\in\R.
\ee

\subsection{The Jost function}
Recall that by Weidmann's theorem (\cite[Thm.~9.38]{tschroe}) the spectrum of $H$ is purely absolutely
continuous on $(0,\infty)$ with an at most countable number of eigenvalues $\lam_n \in (-\infty,0]$.

Then it is easy to show (cf.\ \cite{ko}) that for $k\ne 0$ there exists a unique so called {\em Jost solution} of $\tau f  = z f$
satisfying the asymptotic normalization
\be\label{eq:jost_s}
f(k,x) = \E^{\I k x - \frac{\I \gam}{2 k} \log(x)}(1 + o(1))
\ee
as $x\to \infty$. Here $k=\sqrt{z}$ with the branch cut along the positive real axis such that $0\le \arg(k) < 2\pi$.
The Jost solution is analytic in the upper half plane and can be continuously extended to the real axis
away from $k=0$. We can extend it to the lower half plane by setting $f(k,x) =f(-k,x)= f(k^*,x)^*$ for $\im(k)<0$.

Its derivative satisfies
\be
f'(k,x) = \left(\I k - \frac{\I \gam}{2 k x}\right) \E^{\I k x - \frac{\I \gam}{2 k} \log(x)}(1 + o(1)).
\ee
For $k\in\R\setminus\{0\}$ we obtain two solutions $f(k,x)$ and $f(-k,x)=f(k,x)^*$ of the same equation
whose Wronskian is given by
\be\label{eq:wrfkpm}
W(f(-k,.),f(k,.))= 2\I k.
\ee
{\em The Jost function} is defined as
\be\label{eq:jost_f}
f(k) = W(f(k,.),\phi(k^2,.))
\ee
and we also set
\be
g(k) = W(f(k,.),\theta(k^2,.))
\ee
such that
\be\label{eq:5.7}
f(k,x) = f(k) \theta(k^2,x) - g(k) \phi(k^2,x) = f(k) \psi(k^2,x).
\ee
In particular, the Weyl $m$-function \eqref{eq:s_m} is given by
\be
m(k^2) = -\frac{g(k)}{f(k)},\quad k\in\C_+.
\ee
Note that both $f(k)$ and $g(k)$ are analytic in the upper half plane and $f(k)$ has simple zeros at
$ \I \kappa_n = \sqrt{\lam_n}\in\C_+$. 

Since $f(k,x)^*=f(-k,x)$ for $k\in\R\setminus\{0\}$, we obtain $f(k)^*=f(-k)$, $g(k)^*=g(-k)$. Moreover,
\eqref{eq:wrfkpm} shows
\be\label{eq:phif}
\phi(k^2,x) = \frac{f(-k)}{2\I k} f(k,x) - \frac{f(k)}{2\I k} f(-k,x), \qquad k\in\R\setminus\{0\},
\ee
and by \eqref{eq:5.7} we get
\be
2\I\, \im(f(k) g(k)^*)= f(k)g(k)^* -  f(k)^* g(k) = W(f(-k,.),f(k,.))= 2\I k.
\ee
Hence
\be
\im(m(k^2)) = - \frac{\im\big(f(k)^*g(k)\big)}{|f(k)|^2} = \frac{k}{|f(k)|^2}, \qquad k\in\R\setminus\{0\},
\ee
implying
\be\label{eq:rho}
d\rho(\lam) = \chi_{(0,\infty)}(\lam) \frac{\sqrt{\lam}}{\pi|f(\sqrt{\lam})|^2} d\lam + \sum_n \gam_n d\theta(\lam-\lam_n),
\ee
where
\be\label{def:gam}
\gam_n = \left( \int_0^\infty \phi(\lam_n,x)^2 dx\right)^{-1}
\ee
are the usual norming constants. Since $-\gam_n$ equals the residue of $m(z)$ at $\lam_n$ we obtain
\be
\dot{f}(\I\kappa_n) = -2\I\kappa_n\frac{g(\I\kappa_n)}{\gam_n}, \qquad f(\I\kappa_n,x)= g(\I\kappa_n) \phi(\lam_n,x) .
\ee
By Theorem \ref{th:main} and \eqref{eq:rho}, 
\be
|f(k)|=\frac{|k|^{-l}}{C_l}(1+o(1)),\quad k\to \infty.
\ee

As an immediate consequence of Corollary~\ref{cor:41} we obtain

\begin{theorem}\label{th:4.1}
Suppose the potential $q$ is locally integrable and satisfies \eqref{q:hyp} near $0$ and \eqref{q:hyp2} near $\infty$.
Then the absolute value of the Jost function \eqref{eq:jost_f} on the real line together with the norming constants \eqref{def:gam} and the eigenvalues
determines the value of the angular momentum $l$ and the potential $q$ uniquely. 
\end{theorem}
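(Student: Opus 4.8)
The plan is to reduce the statement to Corollary~\ref{cor:41} by showing that the prescribed scattering quantities reconstruct the full spectral measure $d\rho$. Concretely, I would suppose we are given two operators $H_1$ and $H_2$ of the stated form whose Jost functions agree in modulus on the real axis, $|f_1(k)| = |f_2(k)|$ for all $k \in \R \setminus \{0\}$, and which share the same eigenvalues $\{\lam_n\}$ and the same norming constants $\{\gam_n\}$; the goal is to deduce $l_1 = l_2$ and $q_1 = q_2$ a.e.

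The key step is to read off the spectral measure from \eqref{eq:rho},
\[
d\rho(\lam) = \chi_{(0,\infty)}(\lam)\, \frac{\sqrt{\lam}}{\pi |f(\sqrt{\lam})|^2}\, d\lam + \sum_n \gam_n\, d\theta(\lam - \lam_n).
\]
Here the absolutely continuous part, supported on $(0,\infty)$, is determined by $|f(\sqrt{\lam})|$ alone, hence by the given data (recall $f(-k) = f(k)^*$, so $|f|$ on the positive half-line already fixes it everywhere). The singular part is a pure point measure carried by the eigenvalues $\lam_n \le 0$ with masses exactly equal to the norming constants $\gam_n = \rho(\{\lam_n\})$, cf.\ \eqref{def:gam}. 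Thus equality of $|f_1|$ and $|f_2|$ on $\R$, together with equality of eigenvalues and norming constants, forces $d\rho_1 = d\rho_2$.

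Finally I would invoke Corollary~\ref{cor:41}. Its hypotheses are met: defining the $m_j$ via Frobenius solutions, these are entire of growth order $1/2$ by Remark~\ref{rem:2.02}, so the requirement of order less than one holds. Since the two spectral measures coincide, Corollary~\ref{cor:41} yields $l_1 = l_2$, $b_1 = b_2 = \infty$, and $q_1 = q_2$ for a.e.\ $x$, which is precisely the assertion.

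The genuine content lies upstream---in the high-energy asymptotics of Theorem~\ref{th:main}, which allow one to distinguish $l$ from the behavior of $\rho$, and in the local Borg--Marchenko machinery behind Corollary~\ref{cor:41}. For the present statement there is therefore no serious obstacle; the only point deserving a word of care is the identification, via \eqref{eq:rho} and \eqref{def:gam}, of the point masses of $d\rho$ with the norming constants $\gam_n$, which is what guarantees that the discrete scattering data match the singular part of the measure rather than merely its support.
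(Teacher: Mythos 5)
Your proposal is correct and follows essentially the same route as the paper, which derives the theorem as an immediate consequence of Corollary~\ref{cor:41} using the representation \eqref{eq:rho} of the spectral measure in terms of $|f(\sqrt{\lam})|$ on $(0,\infty)$ together with the eigenvalues and norming constants. The paper leaves these steps implicit ("as an immediate consequence"), and your write-up simply makes the same argument explicit.
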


\subsection{The phase shift}
In this subsection, we assume that the potential $q$ belongs to {\em the Marchenko class}, i.e., in addition to \eqref{q:hyp}, $q$ also satisfies 
\be\label{eq:q_mar}
\int_1^\infty x |\ti q(x)|dx<\infty,\quad 
\ti{q}(x)=\begin{cases}
q(x), & l>-\frac{1}{2},\\
\log(x)q(x), & l=-\frac{1}{2}.
\end{cases}
\ee
The Jost solution of $\tau f  = k^2 f$ is defined to satisfy the following asymptotic normalization:
\be\label{eq:jost_sB}
f(k,x) = \E^{\I (k x - \frac{l\pi}{2})}(1 + o(1))
\ee
as $x\to \infty$. Again $k=\sqrt{z}$ with the branch cut along the positive real axis such that $0\le \arg(k) < \pi$.
By the previous considerations, the Jost solution is analytic in the upper half plane and can be continuously extended to the real axis
away from $k=0$. We can extend it to the lower half plane by setting $f(k,x) =f(-k,x)= f(k^*,x)^*$ for $\im(k)<0$.

The Jost function $f(.)$ is defined by \eqref{eq:jost_f}. Notice that $f(-k)=f(k)^*$ for $k\in\R\setminus\{0\}$. 
Set $f(k)=|f(k)|\E^{-\I\delta(k)}$, $k\in\R\setminus\{0\}$. Note that $\delta(-k)=-\delta(k)$ and hence, by \eqref{eq:phif}, we obtain 
\be\label{eq:phase}
\phi(k^2,x) = \frac{|f(k)|}{k} \sin\left( kx - \frac{l\pi}{2} + \delta(k)\right) + o(1), \quad x\to\infty, \: 0\le \arg(k) < \pi.
\ee
The function $\delta:\R\to\R$ is called {\em the phase shift}
and {\em the scattering matrix} is defined by
\be
S(k) = \frac{f(-k)}{f(k)} = \E^{2\I \delta(k)}, \qquad k\in\R\setminus\{0\}.
\ee
%The quantity
%\be\label{eq:amp}
%A(k)=C_l |k^{l}\, f(k)|,\quad k\in \R\setminus\{0\},
%\ee
%is called {\em the scattering amplitude} \cite{fad}. 

\begin{remark}
In the case $q(x)=0$ we have
\begin{eqnarray}
& \phi_l(z,x) = \frac{z^{-\frac{2l+1}{4}}}{C_l} \sqrt{\frac{\pi x}{2}} J_{l+\frac{1}{2}}(\sqrt{z} x),\\
& f_l(k,x)=\I\sqrt{\frac{\pi xk}{2}}H_{l+1/2}^{(1)}(kx).
\end{eqnarray}
Therefore, 
\be
f_l(k)= \frac{k^{-l}}{C_l}, \quad 0\le \arg(k) < \pi,
\ee
and hence
\be
S_l(k)=1,\quad %A_l(k)= 1,\quad 
\delta_l(k)= 0,\quad k\in\R\setminus\{0\}.
\ee
\end{remark}

%Theorem \ref{th:4.1}  immediately implies the following result.
%
%\begin{corollary}
%Suppose the potential $q$ is locally integrable and satisfies \eqref{q:hyp} near $0$ and \eqref{eq:q_mar} near $\infty$.
%Then the scattering amplitude \eqref{eq:amp} together with the value of the angular momentum $l$, the norming
%constants \eqref{def:gam}, and the eigenvalues determine the potential $q$ uniquely. 
%\end{corollary}

Next we are going to show that $f(k)$ is uniquely determined by its phase $\delta(k)$. 
Firstly, observe that the Jost solution \eqref{eq:jost_sB} has the form 
\be\label{eq:mod_jost}
f(k,x)=C_l^{-1}k^{-l}\ti\psi(k,x) ,\quad \im\, k\ge 0,
\ee
where $\ti\psi$ is given by \eqref{psi.a1}. 
Therefore, by Lemma \ref{lem:b.5}, the function
\be
F(k)=C_lk^l f(k)
\ee
is an analytic function in $\im\, k> 0$ satisfying \eqref{eq:F=1}. By \eqref{Flcrit} it extends continuously to the boundary if $l>-\frac{1}{2}$ and the
same is true in the critical case $l=-\frac{1}{2}$ except for a possible logarithmic singularity at $0$.

Moreover, \eqref{eq:intr_F} implies
\be\label{eq:tid}
\int_\R \frac{\delta(k)}{1+|k|}dk<\infty.
\ee
Indeed, $\delta(.)$ is bounded on $\R$ and, by \eqref{eq:F=1}, near infinity it behaves like $\im\, F$. Hence combining
\[
\im\, \psi_l(k^2,x)=\phi_l(k^2,x)\im\, m_l(k^2)=\frac{C_l^2}{\pi}k^{2l+1}\phi_l(k^2,x).
\]
with \eqref{estphil}, \eqref{eq:intr_F} and Fubini's theorem, we obtain the claimed integrability
\[
\int_1^\infty \frac{\im F(k)}{k}dk\le C\int_0^\infty x|q(x)|\int_1^\infty\frac{(kx)^{2l}x}{(1+kx)^{2l+2}}dk\, dx\le \ti{C}\int_0^\infty x|\ti{q}(x)|dx.
\]
Since $q$ satisfies \eqref{q:hyp} and \eqref{eq:q_mar}, the Bargmann bound (see, e.g., \cite{cs}) implies that $H$ only has
a finite number $N$ of negative eigenvalues $\{-\kappa_n^2\}_{n=1}^N$ and, if $l>\frac{1}{2}$, a possible eigenvalue at $0$.
Using standard techniques (c.f.\ \cite[\S 4]{n60}), it can be shown that $F(.)$ has simple zeros at $\I\kappa_n$
and at most a second order zero at $0$. Therefore,
\[
\im \log\left(\frac{C_l}{k^{-l}}\prod_{n=1}^N\left( 1+ \frac{\kappa_n^2}{k^2}\right)^{-1} f(k)\right)
\]
is in the Hardy space $H^\infty(\C_+)$ and (cf.\ \cite{kos}) we arrive at the following representation of the Jost function
\be\label{eq:intr_f}
f(k) = \frac{k^{-l}}{C_l}\prod_{n=1}^N \left( 1+ \frac{\kappa_n^2}{k^2}\right) \exp\left(  - \frac{1}{\pi} \int_\R \frac{\delta(t)}{t-k} dt \right).
\ee
Now Theorem \ref{th:4.1} immediately implies the following result.

\begin{theorem}\label{th:5.3}
Suppose the potential $q$ is locally integrable and satisfies \eqref{q:hyp} near $0$ and \eqref{eq:q_mar} near $\infty$.
Then the phase shift \eqref{eq:phase} together with the value of the angular momentum $l$, the norming constants, and the eigenvalues
determine the potential $q$ uniquely. 
\end{theorem}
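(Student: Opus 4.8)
The plan is to read the statement as a corollary of the explicit factorization \eqref{eq:intr_f} together with Theorem~\ref{th:4.1}. The point is that \eqref{eq:intr_f} exhibits the Jost function $f(k)$ as a concrete expression built only from quantities belonging to the prescribed scattering data. First I would verify, term by term, that the right-hand side of \eqref{eq:intr_f} depends on nothing beyond $l$, the eigenvalues, and the phase shift: the prefactor $k^{-l}/C_l$ is fixed by the angular momentum $l$; the finite Blaschke-type product is determined by the number $N$ of negative eigenvalues and the numbers $\kappa_n=\sqrt{-\lam_n}$, all of which are encoded in the eigenvalues $\lam_n$; and the outer factor $\exp(-\frac{1}{\pi}\int_\R \frac{\delta(t)}{t-k}\,dt)$ is a functional of the phase shift $\delta$ alone, the defining integral being absolutely convergent by \eqref{eq:tid}.

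Granting this, suppose $q_1$ and $q_2$ both satisfy \eqref{q:hyp} near $0$ and \eqref{eq:q_mar} near $\infty$, and share the same angular momentum $l$, the same phase shift $\delta$, the same eigenvalues, and the same norming constants. By the previous paragraph the formula \eqref{eq:intr_f} returns one and the same function for both potentials, so $f_1(k)=f_2(k)$ throughout $\im\,k\ge 0$, and in particular $|f_1(k)|=|f_2(k)|$ for all $k\in\R\setminus\{0\}$.

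The final step is to apply Theorem~\ref{th:4.1}, which asserts that the modulus of the Jost function on the real line, taken together with the eigenvalues and the norming constants, determines $l$ and $q$ uniquely. Since $q_1$ and $q_2$ produce identical data in each of these three slots---the Jost moduli agree by the step above, while the eigenvalues and norming constants agree by hypothesis---Theorem~\ref{th:4.1} forces $q_1(x)=q_2(x)$ for almost every $x$, which is the desired uniqueness.

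The main obstacle is not in this deduction, which is short, but in the validity of \eqref{eq:intr_f}; that identity rests on identifying the logarithm of the suitably normalized Jost function as an outer (Herglotz-type) function and reconstructing it from its boundary phase via the Poisson--Schwarz representation, after the finitely many zeros at $\I\kappa_n$ have been divided out. This analytic input, however, is already established in the passage preceding the statement, so the only genuine care needed here is the bookkeeping that confirms each factor in \eqref{eq:intr_f} is a function of the given scattering data and of nothing else.
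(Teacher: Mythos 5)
Your proposal is correct and follows essentially the same route as the paper: the paper derives the representation \eqref{eq:intr_f} of the Jost function from $l$, the eigenvalues, and the phase shift (using the Hardy-space/outer-function argument and the integrability condition \eqref{eq:tid}), and then concludes the theorem as an immediate consequence of Theorem~\ref{th:4.1}. Your bookkeeping that each factor of \eqref{eq:intr_f} depends only on the prescribed scattering data, followed by the application of Theorem~\ref{th:4.1}, is exactly the intended argument.
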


%%%%%%%%%%%%%%%%%%%%%%%%%%%%%%%%%%%%
%%%%%%%%%%%%%%%%%%%%%%%%%%%%%%%%%%%%

\appendix

\section{Asymptotic estimates for $m$-functions}\label{app:A}

In this appendix we collect some required results from \cite{Ben}.
Consider the Sturm--Liouville differential expression 
\[
-y''+q(x)y=z\, r(x)y,\quad x\in (0,b),
\]
where $q,r\in L^1_{loc}(0,b)$ are real valued and $r>0$ a.e. on $(0,b)$.
We assume that the endpoint $x=0$ is regular, i.e., $q,r \in L^1(0,c)$ for any $c\in (0,b)$.
Let $m(\cdot)$ be the $m$-functions corresponding to the Neumann boundary condition at $x=0$
such that
\be
s(z,x) + m(z) c(z,x)
\ee
is square integrable near $b$ and satisfies a given boundary condition at $b$ in the limit circle case.

Define the following function
\be\label{eq:fR}
R(x):=\int_0^x r(t)dt
\ee
and note that $R \in W^{1,1}(0,c)$ is positive and strictly increasing on $(0,b)$.

\begin{definition}
We will say that $R$ has limit order $\alpha \in [0,\infty)$ at $x=0$ if for positive $s$
\be\label{eq:lo}
\lim_{x\to 0}\frac{R(sx)}{R(x)}=s^\alpha.
\ee
$R$ has a limit order $\infty$ at $x=0$ if  the limit in \eqref{eq:lo} equals $\infty$ for all $s>1$.
\end{definition}

Define also the function $f$ as the inverse of 
\[
F(x)=\frac{1}{xR(x)}.
\]
 Note that $f(y)\to 0$ as $y\to +\infty$.
The following result is a particular case of Theorem~4.1 from \cite{Ben} (see also \cite[Thm.~2]{kas}).

\begin{theorem}[\cite{Ben, kas}]\label{th:ben}
Assume that $R$ has limit order $\alpha$ at $x=0$. Then any $m$-function corresponding to the Neumann condition at $x=0$ satisfies
\be\label{eq:m_asymp}
m(\mu\rho)= \frac{K_\nu}{(-\mu)^\nu}f(\rho)(1+o(1)) \quad \text{as}\quad \rho\to\infty,
\ee
where
\be\label{eq:nuC}
\nu=\frac{1}{1+\alpha},\quad K_\nu=\frac{\nu^{1-\nu}\Gamma(\nu)}{(1-\nu)^\nu\Gamma(1-\nu)}.
\ee
The estimate holds uniformly for $\mu$ in any compact set of $\C_+$. Also, $K_\nu=1$ for $\nu\in\{0,1\}$.
\end{theorem}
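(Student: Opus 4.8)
The plan is to reduce the high-energy limit $\rho\to\infty$ to a \emph{fixed}-energy limit on a family of rescaled strings, exploiting that the limit-order hypothesis \eqref{eq:lo} is exactly regular variation of $R$ at $0$. First I would introduce the characteristic length $\ell(\rho):=f(\rho)$, which by the definition of $F$ and $f$ satisfies $\rho\,\ell(\rho)R(\ell(\rho))=1$, and rescale $x=\ell(\rho)\tilde x$. The equation $-y''+qy=zry$ then becomes $-\tilde y''+\tilde q_\rho\tilde y=\tilde z\,\tilde r_\rho\tilde y$, where I normalize $\tilde r_\rho(\tilde x):=\ell(\rho)r(\ell(\rho)\tilde x)/R(\ell(\rho))$ so that the rescaled mass is $R_\rho(\tilde x):=\int_0^{\tilde x}\tilde r_\rho=R(\ell(\rho)\tilde x)/R(\ell(\rho))$, and $\tilde q_\rho(\tilde x):=\ell(\rho)^2q(\ell(\rho)\tilde x)$. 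A direct computation shows that the choice $\ell(\rho)=f(\rho)$ sends the spectral parameter $z=\mu\rho$ to the \emph{bounded} value $\tilde z=z\,\ell(\rho)R(\ell(\rho))=\mu$, while the Neumann $m$-function scales linearly by $\ell(\rho)$, giving the key identity
\[
m(\mu\rho)=f(\rho)\,m_\rho(\mu),
\]
with $m_\rho$ the $m$-function of the rescaled problem $(\tilde q_\rho,R_\rho)$. Since $(-\mu)^{-\nu}$ is bounded away from $0$ on compact subsets of $\C_+$, the estimate \eqref{eq:m_asymp} is then equivalent to showing $m_\rho(\mu)\to K_\nu(-\mu)^{-\nu}$ as $\rho\to\infty$, uniformly for $\mu$ in compact subsets of $\C_+$.

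Next I would identify the limit object and compute its $m$-function. Two facts drive the limit. First, the potential disappears: for every fixed $C>0$,
\[
\int_0^{C}|\tilde q_\rho|\,d\tilde x=\ell(\rho)\int_0^{C\ell(\rho)}|q(u)|\,du\longrightarrow0 ,
\]
so at high energy $q$ is asymptotically irrelevant and one is left with a pure Krein string. Second, regular variation of $R$ together with the uniform convergence theorem of Karamata yields $R_\rho(\tilde x)\to\tilde x^{\,\alpha}$ locally uniformly, i.e. the rescaled strings converge to the self-similar string with density $r_\infty(\tilde x)=\alpha\tilde x^{\alpha-1}$. For this model the equation $-y''=z\,\alpha\tilde x^{\alpha-1}y$ is solved explicitly by Bessel functions of order $\nu=\frac{1}{1+\alpha}$; writing the Weyl solution recessive at $+\infty$ as a Hankel function and forming the ratio against the Neumann normalization, the small-argument expansions $J_\nu(w)\sim(w/2)^\nu/\Gamma(\nu+1)$ produce precisely the constant \eqref{eq:nuC} and the branch $(-\mu)^{-\nu}$. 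This explicit special-function computation is where the exact value of $K_\nu$ and the $\mu$-dependence are born, and it also accounts for the degeneracy $K_\nu=1$ at $\nu\in\{0,1\}$.

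The analytic heart, and the step I expect to be the main obstacle, is the continuity of the string-to-$m$-function correspondence: one must pass from the \emph{local} convergence $R_\rho\to\tilde x^{\alpha}$ (valid only for bounded $\tilde x$, since $R$ is regularly varying only near $0$) to convergence of the associated $m$-functions at the fixed non-real parameter $\mu$. The mechanism is Krein's theory of strings, under which weak convergence of the mass distributions corresponds to locally uniform convergence of the Weyl functions on $\C\setminus\R$; the role of $\im\mu\neq0$ is that the Weyl solution decays, so that the far tail $\tilde x\gtrsim$ const, where $R_\rho$ genuinely departs from $\tilde x^{\alpha}$, contributes only an exponentially small and uniformly controllable error. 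Making this quantitative and, crucially, \emph{uniform} for $\mu$ ranging over a compact subset of $\C_+$ is the delicate point, as it requires a priori bounds on the Weyl solutions of the rescaled problems that remain stable under the degeneration $\rho\to\infty$. Finally, the limit-order $\infty$ case (here $\nu=0$, with $f$ slowly varying) has to be handled separately: the self-similar model degenerates and the leading term becomes $\mu$-independent, in agreement with $K_0=1$, but the same rescaling philosophy still applies.
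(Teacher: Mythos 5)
This theorem is quoted in the paper from Bennewitz \cite{Ben} and Kasahara \cite{kas} without proof, so there is no internal argument to compare against; your sketch is, in outline, the classical Kasahara--Kac route. The formal reductions are correct: since $f=F^{-1}$ gives $\rho\, f(\rho)R(f(\rho))=1$, the dilation $x=f(\rho)\tilde x$ does send $z=\mu\rho$ to the fixed parameter $\mu$ and yields the exact identity $m(\mu\rho)=f(\rho)m_\rho(\mu)$ for the Neumann $m$-functions; regular variation of $R$ gives $R_\rho(\tilde x)\to \tilde x^\alpha$ locally uniformly; and the homogeneous string $r_\infty(\tilde x)=\alpha\tilde x^{\alpha-1}$ is indeed solved by Bessel functions of order $\nu=\frac{1}{1+\alpha}$, whose Neumann $m$-function is $K_\nu(-\mu)^{-\nu}$ with $K_\nu$ as in \eqref{eq:nuC}.

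The genuine gap is the step you yourself flag as ``the analytic heart'': you assert, but do not prove, that local convergence of the mass distributions plus $\int_0^C|\tilde q_\rho|\to0$ forces $m_\rho(\mu)\to K_\nu(-\mu)^{-\nu}$ uniformly on compacta of $\C_+$. This is where essentially all the work of \cite{Ben} and \cite{kas} sits. To close it you would need at least: (i) a normal-family argument for the $m_\rho$ (they are Herglotz, since $\im m_\rho(\mu)=\im\mu\int|\psi_\rho|^2\tilde r_\rho$, so it suffices to identify subsequential limits); (ii) a quantitative statement that the region $\tilde x\ge C$ --- where $R_\rho$ is \emph{not} controlled by \eqref{eq:lo}, where the rescaled interval $(0,b/f(\rho))$ and its boundary condition vary with $\rho$, and where the rescaled potential need not be small --- moves $m_\rho(\mu)$ only within a Weyl disk whose radius tends to $0$ uniformly in $\rho$ as $C\to\infty$; and (iii) uniqueness of the limit, i.e.\ that the homogeneous string is limit point at $\infty$. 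None of these is automatic, and the stability of the $m$-function under the $L^1$-small perturbation $\tilde q_\rho$ must itself be made uniform in the weight. Finally, the degenerate cases $\nu=0$ (limit order $\alpha=\infty$, which the paper actually needs for $l=-\tfrac12$) and $\nu=1$ ($\alpha=0$) are not reached by the Bessel model --- the expression \eqref{eq:nuC} for $K_\nu$ is indeterminate there --- and ``the same philosophy applies'' is not a proof; Bennewitz handles these by separate arguments. So: correct strategy and correct reductions, but the substance of the theorem lies precisely in the steps you have labelled as obstacles rather than discharged.
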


Since it will in general not be possible to compute $f$ explicitly, we will rely on the following known fact which follows
upon combining Theorem~1 and Theorem~3 from \cite{dtj}:

\begin{lemma}[\cite{dtj}]\label{lem:a.asymp}
Let $R_0$, $R$ be two functions as in \eqref{eq:fR} and define corresponding functions $f_0$, $f$ as above.
Suppose $R_0$ has a limit order $\alpha\in(0,+\infty]$ and $\lim_{x\to 0} \frac{R(x)}{R_0(x)}=1$. Then $\lim_{y\to +\infty} \frac{f(y)}{f_0(y)}=1$.
\end{lemma}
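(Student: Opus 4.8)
The plan is to translate the statement about the inverse functions into a single identity for $R$ and $R_0$ and then close the argument by a short monotonicity squeeze, so that no machinery beyond the positivity of the densities is actually needed. Writing $x:=f(y)$ and $x_0:=f_0(y)$, the defining relations $F(f(y))=y$ and $F_0(f_0(y))=y$ unfold, via $F(t)=1/(tR(t))$, to the single identity
\[
x\,R(x)=\frac1y=x_0\,R_0(x_0),
\]
so everything reduces to controlling the ratio $\rho(y):=x/x_0$ as $y\to+\infty$.

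First I would record two preliminary facts. Since $t\,R_0(t)\to 0$ as $t\to 0^+$ we have $F_0(t)\to+\infty$, and because $R_0$ is strictly increasing (as $r>0$ a.e.), the map $F_0$ is continuous and strictly decreasing; hence its inverse $f_0$ is well defined for large $y$ and $x_0=f_0(y)\to 0$ (the analogous statement for $f$ being already noted in the text). Consequently the hypothesis $R(t)/R_0(t)\to 1$ as $t\to 0^+$ may be evaluated at the moving point $t=x_0$. The key step is then to rewrite the basic identity as
\[
\frac{x\,R(x)}{x_0\,R(x_0)}=\frac{x_0\,R_0(x_0)}{x_0\,R(x_0)}=\frac{R_0(x_0)}{R(x_0)}\longrightarrow 1\qquad(y\to+\infty).
\]

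It remains to deduce $\rho(y)\to 1$ from the convergence of this quotient to $1$, and here I would exploit the monotonicity of $R$. Whenever $\rho(y)\ge 1$ we have $x\ge x_0$, hence $R(x)\ge R(x_0)$ and therefore $x R(x)/(x_0 R(x_0))\ge\rho(y)$; symmetrically, whenever $\rho(y)\le 1$ the same quotient is $\le\rho(y)$. Thus if $\rho(y_k)\ge 1+\eps$ (respectively $\rho(y_k)\le 1-\eps$) along some sequence $y_k\to+\infty$, the quotient would stay $\ge 1+\eps$ (respectively $\le 1-\eps$), contradicting its convergence to $1$. Hence $\rho(y)\to 1$, which is the assertion.

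The step I expect to require the most care is this last one: monotonicity yields only the one-sided comparison $R(x)/R(x_0)\ge 1$ (or $\le 1$), so one must check that this one-sided bound already forces the squeeze and, in particular, rules out $\rho\to\infty$ — which the estimate $xR(x)/(x_0R(x_0))\ge\rho$ does automatically. I would also note, somewhat pleasantly, that this route uses only the monotonicity of $R$ together with $R\sim R_0$, and never the value of the limit order $\alpha$; the hypothesis $\alpha\in(0,\infty]$ enters the cited theorem of \cite{dtj} but is not needed for the equivalence of the inverses, so the case $\alpha=\infty$ needs no separate treatment. A heavier alternative would invoke regular and rapid variation ($R_0\in RV_\alpha$ forces $R\in RV_\alpha$, hence $F,F_0\in RV_{-(1+\alpha)}$, and asymptotic inversion of regularly varying functions of nonzero index preserves asymptotic equivalence), but the monotonicity argument is shorter and handles $\alpha<\infty$ and $\alpha=\infty$ uniformly.
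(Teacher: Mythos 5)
Your argument is correct, but it takes a genuinely different route from the paper: the paper does not prove Lemma \ref{lem:a.asymp} at all, it quotes it as a consequence of Theorems 1 and 3 of \cite{dtj}, i.e.\ of general results on strong asymptotic equivalence and generalized inverses from the theory of regular variation. Your proof replaces that machinery by an elementary squeeze: from the identity $xR(x)=x_0R_0(x_0)=1/y$ (with $x=f(y)$, $x_0=f_0(y)$) and the monotonicity of $R$ one gets $1\le x/x_0\le R_0(x_0)/R(x_0)$ when $x\ge x_0$ and the reversed chain when $x\le x_0$, so
\[
\min\Bigl(1,\tfrac{R_0(x_0)}{R(x_0)}\Bigr)\le\frac{f(y)}{f_0(y)}\le\max\Bigl(1,\tfrac{R_0(x_0)}{R(x_0)}\Bigr),
\]
and the conclusion follows from $R\sim R_0$ and $f_0(y)\to0$. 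I checked both cases of the squeeze and the preliminary facts (strict monotonicity of $xR(x)$, hence invertibility of $F$, and $f_0(y)\to0$); they are all in order. The structural point you are exploiting, worth making explicit, is that $G(x)=xR(x)$ satisfies $G(x)/G(x_0)\ge x/x_0$ for $x\ge x_0$ because the factor $R$ is increasing; for a general increasing $G$ with $G\sim G_0$ the inverses need \emph{not} be asymptotically equivalent, so the specific product structure is essential and is exactly what the lemma supplies. What your route buys is a short self-contained proof together with the observation that the limit-order hypothesis $\alpha\in(0,\infty]$ is superfluous for this particular statement (it is of course still needed in Theorem \ref{th:ben} itself); what the paper's citation buys is brevity and a single reference covering the regular-variation facts. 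If you write this up, state explicitly that the limit-order hypothesis is not used, so the reader does not search for a missing step.
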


\begin{remark}
Theorem \ref{th:ben} was first established by I.S. Kac \cite{K73} and Y. Kasahara \cite{kas} for the case $\alpha\in (0,\infty)$.
The current form of Theorem \ref{th:ben} was found by C. Bennewitz \cite{Ben}. Moreover, the converse statement has been
established independently in \cite{Ben} and \cite{kas}. 
\end{remark}

\section{Some estimates for the spherical Schr\"odinger equation}\label{app:B}

In this appendix we want to describe some properties of the solutions of the spherical Schr\"odinger equation
which are crucial for Section~\ref{sec:QST}. 

The first two lemmas contain estimates for the Green function 
\be \label{a2}
G_l(z,x,y) = \phi_l(z,x) \theta_l(z,y) - \phi_l(z,y) \theta_l(z,x)
\ee
and the regular solution $\phi(z,x)$ (see, e.g., \cite[Lems.~2.2, A.1, A.2]{kst}).  Here 
\be\label{defphil}
\phi_l(z,x) = C_l^{-1}z^{-\frac{2l+1}{4}} \sqrt{\frac{\pi x}{2}} J_{l+\frac{1}{2}}(\sqrt{z} x),
\ee
and
\be\label{defthetal}
\theta_l(z,x) = -C_lz^{\frac{2l+1}{4}} \sqrt{\frac{\pi x}{2}} \begin{cases}
\frac{-1}{\sin((l+\frac{1}{2})\pi)} J_{-l-\frac{1}{2}}(\sqrt{z} x), & {l+\frac{1}{2}} \in \R_+\setminus \N_0,\\
Y_{l+\frac{1}{2}}(\sqrt{z} x) -\frac{1}{\pi}\log(z) J_{l+\frac{1}{2}}(\sqrt{z} x), & {l+\frac{1}{2}} \in\N_0,\end{cases}
\ee
where $J_{l+1/2}$ and $Y_{l+1/2}$ are the usual Bessel and Neumann functions (see \cite[Chap.~10]{dlmf}).
We will abbreviate $k=\sqrt{z}$, $0\le\arg(z)<2\pi$.

\begin{lemma}[\cite{kst}] \label{lem:b.1}
For $l>-\frac{1}{2}$ the following estimates hold:
\be\label{estphil}
|\phi_l(k^2,x)| \leq C \left(\frac{x}{1+ |k| x}\right)^{l+1} \E^{|\im\, k| x},
\ee
\be \label{estGl}
|G_l(k^2,x,y)| \leq C \left(\frac{x}{1+ |k| x}\right)^{l+1} \left(\frac{1+ |k| y}{y}\right)^l \E^{|\im\, k| (x-y)}, \quad y \leq x,
\ee
and
\be
\label{estGlp}
\left|\frac{\partial}{\partial x} G_l(k^2,x,y)\right| \leq C \left(\frac{x}{1+ |k| x}\right)^l \left(\frac{1+ |k| y}{y}\right)^l \E^{|\im\, k| (x-y)}, \quad y \leq x.
\ee
For the case $l=-\frac{1}{2}$  formula \eqref{estphil} remains valid and one has to replace \eqref{estGl} and \eqref{estGlp} by
\begin{align}\label{a8}
|G_{-\frac{1}{2}}(k^2,x,y)| \leq C \left(\frac{x}{1+ |k| x}\cdot\frac{y}{1+ |k| y}\right)^{\frac{1}{2}}
\E^{|\im\, k| (x-y)}\big(1-\log(\frac{|k|y}{1+|k|y})\big), \quad y \leq x,\\
\left|\frac{\partial}{\partial x}G_{-\frac{1}{2}}(k^2,x,y)\right| \leq C \left(\frac{y+ |k| xy}{x+ |k| xy}\right)^{\frac{1}{2}}
\E^{|\im\, k| (x-y)}\big(1-\log(\frac{|k|y}{1+|k|y})\big), \quad y \leq x.\label{a21}
\end{align}
\end{lemma}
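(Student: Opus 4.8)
The plan is to work directly from the explicit representations \eqref{defphil} and \eqref{defthetal}, reducing all four bounds to uniform pointwise estimates for the Bessel and Neumann functions on the sector $0\le\arg(w)<2\pi$. The estimates I have in mind interpolate between the small- and large-argument asymptotics: for $\nu\ge 0$,
\[
|J_\nu(w)|\le C\,\frac{|w|^\nu}{(1+|w|)^{\nu+\frac12}}\,\E^{|\im w|},
\]
while for $\nu>0$ both the second solution $J_{-\nu}$ and the Neumann function $Y_\nu$ obey
\[
|J_{-\nu}(w)|+|Y_\nu(w)|\le C\,\frac{(1+|w|)^{\nu-\frac12}}{|w|^\nu}\,\E^{|\im w|}.
\]
Each follows by comparing the region $|w|\le 1$, where the leading power $w^{\pm\nu}$ (resp.\ a $\log$ for $\nu=0$) dominates, with $|w|\ge 1$, where the Hankel asymptotics supply the factor $\E^{|\im w|}$.

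For \eqref{estphil} I would set $w=kx$ and $\nu=l+\frac12$ in the first bound; the prefactor $z^{-(2l+1)/4}=k^{-l-\frac12}$ in \eqref{defphil} cancels exactly the factor $|k|^{l+\frac12}$ produced by $|w|^\nu$, leaving $(x/(1+|k|x))^{l+1}\E^{|\im k|x}$, which is \eqref{estphil}. The same computation applied to $\theta_l$ via the second bound (its prefactor $z^{(2l+1)/4}=k^{l+\frac12}$ cancels the $|w|^{-\nu}$) yields the auxiliary estimate $|\theta_l(k^2,y)|\le C\,((1+|k|y)/y)^l\,\E^{|\im k|y}$.

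The delicate point, and the \emph{main obstacle}, is the Green-function bound \eqref{estGl}: simply multiplying the estimates for $\phi_l(k^2,x)$ and $\theta_l(k^2,y)$ produces the factor $\E^{|\im k|(x+y)}$, whereas \eqref{estGl} demands $\E^{|\im k|(x-y)}$ for $y\le x$. This gain rests on a cancellation in the cross product: since the powers of $k$ cancel and the logarithmic term in \eqref{defthetal} (being proportional to $\phi_l$) drops out of $G_l$, one has
\[
G_l(k^2,x,y)=c\,\sqrt{xy}\,\bigl[B_\nu(kx)J_{\nu}(ky)-B_\nu(ky)J_{\nu}(kx)\bigr],
\]
with $\nu=l+\frac12$ and $B_\nu\in\{J_{-\nu},Y_\nu\}$. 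Inserting the large-argument Hankel expansions and using product-to-sum identities, the two $\cos(k(x+y)-\cdots)$ contributions cancel and what remains is proportional to $\sin(k(x-y))$, which is bounded by $\E^{|\im k|(x-y)}$ for $y\le x$. I would make this rigorous by splitting into the regions $|k|x\le 1$, $|k|y\le 1\le|k|x$, and $1\le|k|y$, controlling remainders in the oscillatory regime; alternatively, and perhaps more transparently, one rewrites $G_l$ using the decaying Jost solution $f_l(k,x)\propto\sqrt{kx}\,H^{(1)}_{\nu}(kx)$ — which carries $\E^{-|\im k|x}$ in $\C_+$ and differs from $\theta_l$ only by a multiple of $\phi_l$, hence leaves $G_l$ unchanged — so that $G_l$ becomes a combination of a solution growing at the small argument $y$ and one decaying at the large argument $x$, giving $\E^{|\im k|(x-y)}$ directly.

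The derivative estimate \eqref{estGlp} follows by the same scheme after differentiating in $x$ and using the recurrence $J_\nu'(w)=J_{\nu-1}(w)-\frac{\nu}{w}J_\nu(w)$ (and its analogue for $B_\nu$) to bound the differentiated Bessel factors; the loss of one power of $x/(1+|k|x)$ relative to \eqref{estGl} is exactly what this recurrence produces. Finally, the critical case $l=-\frac12$ requires only that $J_{-l-\frac12}=J_{l+\frac12}=J_0$ degenerate, so that $\theta_{-1/2}$ is governed by $Y_0$; the small-argument behaviour $Y_0(w)\sim\frac{2}{\pi}\log(w/2)$ replaces the pure negative power and produces exactly the logarithmic factor $1-\log(|k|y/(1+|k|y))$ appearing in \eqref{a8} and \eqref{a21}, while the cancellation yielding $\E^{|\im k|(x-y)}$ goes through unchanged.
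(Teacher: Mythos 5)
Your proposal is essentially the argument of the cited source \cite{kst} (the present paper gives no proof of Lemma~\ref{lem:b.1}, only the reference): uniform bounds $|J_\nu(w)|\le C|w|^\nu(1+|w|)^{-\nu-1/2}\E^{|\im w|}$ and $|Y_\nu(w)|\le C(1+|w|)^{\nu-1/2}|w|^{-\nu}\E^{|\im w|}$ give \eqref{estphil} after the exact cancellation of the $k$-powers in \eqref{defphil}, and the Green function is handled by replacing $\theta_l$ with the Weyl/Hankel solution $\psi_l=\theta_l+m_l\phi_l$ of Lemma~\ref{lem:b.3}, which leaves $G_l$ unchanged and carries the decaying factor $\E^{-|\im k|x}$; the term $\phi_l(k^2,y)\psi_l(k^2,x)$ is then absorbed into the stated bound because $t\mapsto t/(1+|k|t)$ is increasing and $2l+1>0$ (with the same monotonicity argument taking care of the logarithmic weight when $l=-\frac12$). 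One caveat: of the two routes you offer for \eqref{estGl}, only this second one is actually workable. The first — inserting the large-argument Hankel expansions and cancelling the $\cos(k(x+y)-\cdots)$ terms via product-to-sum identities — only cancels the \emph{leading} terms; the remainders in those expansions are merely $O(\E^{|\im w|}/|w|)$, so the cross terms still carry the inadmissible factor $\E^{|\im k|(x+y)}$ and no amount of splitting into the regions $|k|x\le1$, $|k|y\le1\le|k|x$, $1\le|k|y$ recovers \eqref{estGl} from them. The cancellation must be exploited at the level of an exact identity, which is precisely what the rewriting $G_l(z,x,y)=\phi_l(z,x)\psi_l(z,y)-\phi_l(z,y)\psi_l(z,x)$ provides; with that route (and the corresponding derivative bounds for $\phi_l'$ and $\psi_l'$ in place of the recurrence bookkeeping) the proof is complete and coincides with the one in \cite{kst}.
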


\begin{lemma}[\cite{kst}]\label{lem:b.2}
Assume \eqref{q:hyp} and set $\ti{q}(x)=q(x)$ if $l>-1/2$ and $\ti{q}(x)=(1-\log(\frac{x}{1+x}))q(x)$ if $l=-1/2$.
Then $\phi(z,x)$ satisfies the integral
equation
\be \label{a1}
\phi(z,x) = \phi_l(z,x) + \int_0^x G_l(z,x,y) q(y) \phi(z,y) dy.
\ee
 Moreover, $\phi$ is entire in $z$ for every $x>0$ and satisfies the estimate
\be\label{estphi}
| \phi(k^2,x) - \phi_l(k^2,x)| \leq C \left(\frac{x}{1+ |k| x}\right)^{l+1} \E^{|\im\, k| x} \int_0^x \frac{y \ti{q}(y)}{1 +|k| y} dy.
\ee
The derivative is given by
\be
\phi'(z,x) = \phi_l'(z,x) + \int_0^x \frac{\partial}{\partial x}G_l(z,x,y) q(y) \phi(z,y) dy
\ee
and satisfies the estimate
\be\label{estphi'}
| \phi'(k^2,x) - \phi_l'(k^2,x)| \leq C  \left(\frac{x}{1+ |k| x}\right)^l \E^{|\im\, k| x} \int_0^x \frac{y \ti{q}(y)}{1 +|k| y} dy.
\ee
\end{lemma}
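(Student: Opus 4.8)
The plan is to treat $q$ as a perturbation and recast the eigenvalue equation \eqref{eq:sp01} as a Volterra integral equation against the unperturbed pair $\phi_l,\theta_l$. First I would record that, by the normalisation in \eqref{defphil}--\eqref{defthetal}, one has $\phi_l(z,x)\sim x^{l+1}$ and $\theta_l(z,x)\sim x^{-l}/(2l+1)$ as $x\to 0$, so that a direct computation of the Wronskian gives $W(\theta_l,\phi_l)=1$. Consequently the kernel \eqref{a2} satisfies $G_l(z,x,x)=0$ and $\partial_x G_l(z,x,y)\big|_{x=y}=1$, and for fixed $y$ it solves the homogeneous unperturbed equation in $x$. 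Differentiating the right-hand side of \eqref{a1} twice and inserting these two facts shows that any continuous solution of \eqref{a1} solves $\tau\phi=z\phi$, while the vanishing of the integral term to higher order at $x=0$ pins down the normalisation $\phi\sim\phi_l$; this identifies \eqref{a1} as the correct integral equation. The derivative formula follows by differentiating \eqref{a1} once, the boundary term dropping out precisely because $G_l$ vanishes on the diagonal.

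Next I would construct the solution as a Neumann series, setting $\phi^{(0)}=\phi_l$ and $\phi^{(n+1)}(z,x)=\int_0^x G_l(z,x,y)q(y)\phi^{(n)}(z,y)\,dy$. The key is the inductive estimate
\be\nn
|\phi^{(n)}(k^2,x)|\le C_0\Big(\frac{x}{1+|k|x}\Big)^{l+1}\E^{|\im\,k|x}\,\frac{1}{n!}\Big(C_1\int_0^x\frac{y\,\ti q(y)}{1+|k|y}\,dy\Big)^{n},
\ee
where $C_0,C_1$ are the constants in \eqref{estphil} and \eqref{estGl}. The base case $n=0$ is \eqref{estphil}. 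For the inductive step I substitute \eqref{estGl} and the bound for $\phi^{(n)}$ into the definition of $\phi^{(n+1)}$; the decisive algebraic simplification is that the $y$-dependent prefactors collapse,
\be\nn
\Big(\frac{1+|k|y}{y}\Big)^{l}\Big(\frac{y}{1+|k|y}\Big)^{l+1}=\frac{y}{1+|k|y},
\ee
while the exponentials combine to $\E^{|\im\,k|x}$, so that the remaining $y$-integral telescopes to $\tfrac{1}{(n+1)!}(C_1\int_0^x\tfrac{t\ti q(t)}{1+|k|t}dt)^{n+1}$ by the fundamental theorem of calculus (using $|q|\le\ti q$). Summing over $n\ge 1$ and bounding $\E^{I}-1\le I\,\E^{I}$ with $I:=C_1\int_0^x\tfrac{y\ti q(y)}{1+|k|y}\,dy$ (finite by \eqref{q:hyp}) then yields \eqref{estphi} after absorbing $\E^{I}$ into $C$.

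With the series in hand, analyticity in $z$ is immediate: $\phi_l$, $\theta_l$, and hence $G_l$, are entire in $z$ (the fractional powers of $z$ cancel against those hidden in the Bessel functions), so each $\phi^{(n)}$ is entire, and the $1/n!$ factor forces locally uniform convergence, whence Weierstrass's theorem applies. The estimate \eqref{estphi'} is obtained in the same spirit: I would differentiate \eqref{a1} in $x$, note that the boundary contribution vanishes because $G_l(z,x,x)=0$, and then combine \eqref{estGlp} with the bound $|\phi(k^2,y)|\le C(\tfrac{y}{1+|k|y})^{l+1}\E^{|\im\,k|y}$ that follows from \eqref{estphil} and the already-proved \eqref{estphi}; the same collapse of prefactors reproduces $\int_0^x\tfrac{y\ti q(y)}{1+|k|y}\,dy$.

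The main obstacle is the critical case $l=-\tfrac12$. There \eqref{estGl}--\eqref{estGlp} must be replaced by \eqref{a8}--\eqref{a21}, which carry an extra factor $1-\log(|k|y/(1+|k|y))$. To close the induction this factor has to be absorbed into the weight $1-\log(x/(1+x))$ defining $\ti q$ in this case, which requires the elementary but slightly delicate comparison of the two logarithms uniformly in $k$ and $y$; once this is arranged, the same telescoping argument runs through verbatim.
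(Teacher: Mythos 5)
First, the paper does not actually prove this lemma: it is quoted from \cite{kst} (see the pointer there to Lemmas 2.2, A.1, A.2), and the proof in that source is precisely the successive-approximation scheme you describe --- the Volterra iteration $\phi^{(n+1)}=\int_0^xG_l\,q\,\phi^{(n)}$, the collapse $\left(\frac{1+|k|y}{y}\right)^l\left(\frac{y}{1+|k|y}\right)^{l+1}=\frac{y}{1+|k|y}$, and the $1/n!$ telescoping. For $l>-\frac12$ your argument is complete and correct (modulo the harmless points that the integrals must carry $|q|$, and that the constant absorbs $\E^{C_1 I}$ and hence is uniform only for $x$ in a fixed bounded interval, which is all that is claimed or needed).

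The genuine gap is exactly where you located it, and it is worse than ``slightly delicate'': the comparison you need to close the induction in the critical case, namely $1-\log\frac{|k|y}{1+|k|y}\le C\bigl(1-\log\frac{y}{1+y}\bigr)$ uniformly in $k$ and $y$, is \emph{false}. Since $t\mapsto 1+\log(1+1/t)$ is decreasing, the inequality holds with $C=1$ whenever $|k|\ge1$, but for $|k|<1$ the best constant grows like $1+\log(1/|k|)$ and blows up as $k\to0$ (fix $y$ and let $k\to 0$: the left-hand side diverges while the right-hand side does not). So the iteration as you set it up does not close with a $k$-independent constant. The repair is not to absorb the logarithm prematurely: run the induction against $Q_k(x):=\int_0^x\frac{y}{1+|k|y}\bigl(1-\log\frac{|k|y}{1+|k|y}\bigr)|q(y)|\,dy$, which closes verbatim because the logarithmic factor from \eqref{a8}--\eqref{a21} enters linearly and the telescoping only uses that the integrand is $Q_k'$. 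This gives \eqref{estphi} with $Q_k$ on the right-hand side (which is how the source states it); the form with $\ti q$ printed here then follows for $|k|\ge1$ --- the only regime in which the lemma is used in this paper (high-energy asymptotics, and the $k$-integrals in Section \ref{sec:QST} run over $k\ge1$) --- while for $|k|<1$ one must either keep $Q_k$ or accept a constant depending on a lower bound for $|k|$.
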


Next we need some estimates for the Weyl solution $\psi(z,x)$. We begin with some basic properties of the unperturbed Bessel equation.

\begin{lemma}\label{lem:b.3}
Let $\psi_l(z,x)$ be the Weyl solution of the unperturbed Bessel equation,
\begin{align}\nn
\psi_l(z,x) &=\theta_l(z,x)+m_l(z,x)\phi_l(z,x)=\I C_l(\I \sqrt{-z})^{l+\frac{1}{2}}\sqrt{\frac{\pi x}{2}}H^{(1)}_{l+\frac{1}{2}}(\I \sqrt{-z}x)\\
&= \I C_l k^{l+\frac{1}{2}}\sqrt{\frac{\pi x}{2}}H^{(1)}_{l+\frac{1}{2}}(k x)
\end{align}
where $m_l$ is given by \eqref{eq:II.11} and $H_{l+\frac{1}{2}}^{(1)}$
is the Hankel function of the first kind.

 If $l>-1/2$, then $\psi_l(k^2,x)$ is analytic in $\im\, k>0$, continuous in $\im\, k\ge 0$ and  
\be\label{est:psi_lA}
|\psi_l(k^2,x)|\le C\left(\frac{x}{1+ |k| x}\right)^{-l} \E^{-|\im\, k| x}.
\ee

If $l=-1/2$, then  $\psi_{-\frac{1}{2}}(k^2,x)$ is analytic in $\im\, k>0$, continuous in $\overline{\C}_+\setminus\{0\}$ and  
\be\label{est:psi_lB}
|\psi_{-\frac{1}{2}}(k^2,x)|\le C\left(\frac{x}{1+ |k| x}\right)^{\frac{1}{2}}\left(1-\log\Big(\frac{|k|x}{1+|k|x}\Big)\right) \E^{-|\im\, k| x}.
\ee
Moreover,
\be
\sqrt{k}\psi_{-\frac{1}{2}}(k^2,x)\sim -\sqrt{kx}\log(kx),\quad kx\to 0.
\ee
\end{lemma}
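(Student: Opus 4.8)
The plan is to first establish the explicit representation of $\psi_l$ in terms of the Hankel function $H^{(1)}_{l+\frac{1}{2}}$, and then to read off every analytic and quantitative assertion from the known behavior of Hankel functions. For the explicit formula I would combine the definitions \eqref{defphil}, \eqref{defthetal} with $m_l$ from \eqref{eq:II.11} in $\psi_l=\theta_l+m_l\phi_l$. In the non-integer case $l+\frac{1}{2}\in\R_+\setminus\N_0$ the key ingredient is the connection formula
\be
H^{(1)}_\nu(w)=\frac{J_{-\nu}(w)-\E^{-\nu\pi\I}J_\nu(w)}{\I\sin(\nu\pi)}
\ee
(see \cite[\S10.4]{dlmf}) with $\nu=l+\frac{1}{2}$ and $w=kx$: the $J_{-l-\frac{1}{2}}$ part coming from $\theta_l$ matches the $J_{-\nu}$ contribution of $H^{(1)}_\nu$, while the $J_{l+\frac{1}{2}}$ part produced by $m_l\phi_l$ matches the remaining term. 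The only delicate point is to track the branch conventions so that $z^{\pm\frac{2l+1}{4}}$ and $(-z)^{l+\frac{1}{2}}$ collapse to the single power $k^{l+\frac{1}{2}}$ under the identifications $\sqrt{z}=k=\I\sqrt{-z}$ and $-z=z\E^{-\I\pi}$. In the integer case $l+\frac{1}{2}\in\N_0$ I would instead use $H^{(1)}_n=J_n+\I Y_n$ directly; here the two logarithmic terms combine via $\log(-z)-\log(z)=-\I\pi$ and again reproduce the Hankel function, verifying both right-hand sides of the displayed identity.

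Once the representation $\psi_l(k^2,x)=\I C_l k^{l+\frac{1}{2}}\sqrt{\frac{\pi x}{2}}H^{(1)}_{l+\frac{1}{2}}(kx)$ is available, analyticity in $\im\,k>0$ and continuity up to the real axis follow at once from the corresponding properties of $w\mapsto H^{(1)}_{l+\frac{1}{2}}(w)$ in the closed upper half plane. For $l>-\frac{1}{2}$ the prefactor $k^{l+\frac{1}{2}}$ exactly compensates the singularity $H^{(1)}_{l+\frac{1}{2}}(w)\sim-\frac{\I}{\pi}\Gamma(l+\frac{1}{2})(w/2)^{-(l+\frac{1}{2})}$ as $w\to0$, so that $\psi_l$ in fact extends continuously through $k=0$. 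In the critical case $l=-\frac{1}{2}$ the order is $\nu=0$ and $H^{(1)}_0(w)$ carries a logarithmic singularity as $w\to0$, which is precisely why continuity holds only on $\overline{\C}_+\setminus\{0\}$ and why the extra factor $1-\log(\frac{|k|x}{1+|k|x})$ is forced upon us in \eqref{est:psi_lB}.

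For the estimates I would reduce \eqref{est:psi_lA} and \eqref{est:psi_lB} to a single uniform bound on the Hankel function. Writing $w=kx$, so that $\im\,w=|\im\,k|\,x$, a short computation using $|k|^{l+\frac{1}{2}}x^{l+\frac{1}{2}}=|w|^{l+\frac{1}{2}}$ shows that \eqref{est:psi_lA} is equivalent to $|H^{(1)}_{l+\frac{1}{2}}(w)|\le C\,(1+|w|)^l|w|^{-(l+\frac{1}{2})}\E^{-\im\,w}$, and \eqref{est:psi_lB} to $|H^{(1)}_0(w)|\le C\,(1+|w|)^{-\frac{1}{2}}(1-\log\frac{|w|}{1+|w|})\E^{-\im\,w}$. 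Both bounds interpolate between the small-argument asymptotics (a power, respectively a logarithm) and the large-argument asymptotics $H^{(1)}_\nu(w)\sim\sqrt{2/(\pi w)}\,\E^{\I(w-\nu\pi/2-\pi/4)}$, and are obtained by combining these two regimes with a compactness argument on the bounded transition region, uniformly for $\im\,w\ge0$. Finally, the asymptotic $\sqrt{k}\,\psi_{-\frac{1}{2}}(k^2,x)\sim-\sqrt{kx}\log(kx)$ follows by inserting $H^{(1)}_0(kx)\sim\frac{2\I}{\pi}\log(kx)$ into the explicit formula and using $C_{-\frac{1}{2}}=\sqrt{\pi/2}$, whereupon $\I^2\,\frac{2}{\pi}\,C_{-\frac{1}{2}}\sqrt{\pi/2}=-1$ and the numerical constant collapses to $-1$.

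The main obstacle will be twofold: the bookkeeping of branch cuts in the first step, where the powers of $z$ and of $-z$ must combine consistently with the chosen branch $0\le\arg z<2\pi$; and the derivation of the uniform Hankel estimates of the third step with the precise $|w|$-dependence, since these have to hold uniformly up to the real axis, where the functions are merely oscillatory rather than exponentially decaying.
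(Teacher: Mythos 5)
Your proposal is correct. Note that the paper states Lemma \ref{lem:b.3} without proof, treating it as a collection of standard facts about the unperturbed Bessel equation, so there is no in-paper argument to compare against; your outline is precisely the intended verification. The key steps all check out: with $k=\I\sqrt{-z}$ one has $(-z)^{l+\frac12}=k^{2l+1}\E^{-\I\pi(l+\frac12)}$ and $\log(-z)=\log(z)-\I\pi$, so the connection formula $H^{(1)}_\nu(w)=\bigl(J_{-\nu}(w)-\E^{-\nu\pi\I}J_\nu(w)\bigr)/(\I\sin(\nu\pi))$ (respectively $H^{(1)}_\nu=J_\nu+\I Y_\nu$ in the integer case) does collapse $\theta_l+m_l\phi_l$ to $\I C_l k^{l+\frac12}\sqrt{\pi x/2}\,H^{(1)}_{l+\frac12}(kx)$; the substitution $w=kx$ reduces \eqref{est:psi_lA} and \eqref{est:psi_lB} to uniform Hankel bounds on $\overline{\C}_+$ that follow from the small- and large-argument asymptotics (uniform in the sector $0\le\arg w\le\pi$) plus continuity on the compact transition region; and the constant in the final asymptotic indeed evaluates to $-1$. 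The only point worth making explicit in a written version is the continuity of $\psi_l(k^2,x)$ at $k=0$ for $l>-\frac12$, which you correctly attribute to the cancellation of $k^{l+\frac12}$ against the $w^{-(l+\frac12)}$ singularity of $H^{(1)}_{l+\frac12}$.
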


Using the standard technique (see, e.g., \cite[Chap.~I.5]{cs}), one can prove the following

\begin{lemma}\label{lem:b.4}
Assume \eqref{eq:q_mar}.
Then  there is a solution $\ti\psi(z,x)$ satisfying the integral
equation
\be \label{psi.a1}
\ti\psi(z,x) = \psi_l(z,x) - \int_x^\infty G_l(z,x,y) q(y) \ti\psi(z,y) dy.
\ee

Moreover, if $l>-1/2$, then $\ti\psi(k^2,x)$ is analytic in $\im\, k>0$, continuous in $\im\, k\ge 0$ and  
it satisfies the estimate
\be\label{estpsi}
| \ti\psi(k^2,x) - \psi_l(k^2,x)| \leq C \left(\frac{x}{1+ |k| x}\right)^{-l} \E^{-|\im\, k|\, x} \int_x^\infty \frac{y q(y)}{1 +|k| y} dy.
\ee

If $l=-1/2$, then  $\ti\psi(k^2,x)$ is analytic in $\im\, k>0$, continuous in $\overline{\C}_+\setminus\{0\}$ and 
\be
| \ti\psi(k^2,x) - \psi_{-\frac{1}{2}}(k^2,x)| \leq C \left(\frac{x}{1+ |k| x}\right)^{\frac{1}{2}}\left(1-\log\Big(\frac{|k|x}{1+|k|x}\Big)\right) \E^{-|\im\, k|\, x} \int_x^\infty \frac{y \ti{q}(y)}{1 +|k| y} dy.
\ee
Moreover, 
\be
|\sqrt{k}\ti\psi(k^2,x)|=O\big(-\sqrt{|k|x}\log(|k|x)\big),\quad |k|x\to 0.
\ee

The derivative is given by
\be
\ti\psi'(z,x) = \psi_l'(z,x) - \int_x^\infty \frac{\partial}{\partial x}G_l(z,x,y) q(y) \ti\psi(z,y) dy
\ee
and satisfies the estimate
\be
| \ti\psi'(k^2,x) - \psi_l'(k^2,x)| \leq C  \left(\frac{x}{1+ |k| x}\right)^{-l-1} \E^{-|\im\, k|\, x} \int_x^\infty \frac{y q(y)}{1 +|k|y} dy.
\ee
if $l>-1/2$ and 
\be
| \ti\psi'(k^2,x) - \psi_{-\frac{1}{2}}'(k^2,x)| \leq C  \left(\frac{x}{1+ |k| x}\right)^{-\frac{1}{2}}\left(1-\log\Big(\frac{|k|x}{1+|k|x}\Big)\right) \E^{-|\im\, k|\, x} \int_x^\infty \frac{y \ti{q}(y)}{1 +|k|y} dy.
\ee
if $l=-1/2$.
\end{lemma}

Finally, consider the following function
\be\label{eq:a.F}
F(k):=W(\ti\psi(k^2,.),\phi(k^2,.)),\quad \im\, k\ge 0,\ k\neq 0. 
\ee

\begin{lemma}\label{lem:b.5}
Assume \eqref{q:hyp} and \eqref{eq:q_mar}. Then the  function $F$ admits the following integral representation
\be\label{eq:intr_F}
F(k)=1+\int_0^\infty \psi_l(k^2,x)\phi(k^2,x) q(x)dx = 1+\int_0^\infty \ti{\psi}(k^2,x)\phi_l(k^2,x) q(x)dx.
\ee
Moreover, $F$ is analytic in $\C_+$ and 
\be\label{eq:F=1}
F(k)=1+o(1)
\ee
as $|k|\to \infty$ in $\im\, k\ge 0$. 

If $l>-1/2$, then $F$ is continuous and bounded on $\im\, k\ge 0$. In the case $l=-1/2$    
\be\label{Flcrit}
F(k)=c\, \log(-k^2) + \ti{F}(k),\quad k\in\R\setminus\{0\}
\ee
where $c\in\R$ and $\ti{F}$ is bounded continuous, and also 
\be\label{eq:b.24}
|F(k)|=O(\log(k))\ \ \ \text{as}\ \ k\to 0\ \ \ \text{in}\ \ \im\, k\ge 0.
\ee
 \end{lemma}

\begin{proof}
To prove the integral representations \eqref{eq:intr_F}, we need to replace $\phi$ and $\psi$ in \eqref{eq:a.F} by \eqref{a1} and \eqref{psi.a1}, respectively, use the asymptotic estimates for $\phi$, $\psi$ and $G_l$, and then take the limits $x\to +\infty$ and $x\to 0$. 

To prove the second claim, we observe that by \eqref{estphil} and \eqref{est:psi_lA}
\begin{align}\label{eq:b.22}
|\psi_l(k^2,x)\phi(k^2,x)|&\le C\frac{x}{1+|k|x},\quad l>-\frac{1}{2},\\
|\psi_{-\frac{1}{2}}(k^2,x)\phi(k^2,x)|&\le C\frac{x}{1+|k|x}\left(1-\log\Big(\frac{|k|x}{1+|k|x}\Big)\right),\quad l=-\frac{1}{2},\label{eq:b.23}
\end{align}
which immediately implies \eqref{eq:F=1}.

Moreover, if $l>-1/2$, then \eqref{eq:b.22} shows that in this case $F$ is analytic in $\C_+$ and continuous in $\overline{\C}_+$.
If $l=-1/2$, then \eqref{eq:b.23} implies analyticity in $\C_+$ and also the estimate \eqref{eq:b.24}.
Finally, notice that 
\[
F(k)=1+\int_0^{+\infty} \theta_l(k^2,x)\phi(k^2,x) q(x)dx + m_l(k^2)\int_0^{+\infty} \phi_l(k^2,x)\phi(k^2,x) q(x)dx
\]
and the integrals converge for $k\in \R$. This implies \eqref{Flcrit}.
\end{proof}

\begin{remark}
By choosing $q$ to be a characteristic function in \eqref{eq:intr_F}, one sees that $c$ in \eqref{Flcrit} is nonzero in general.
\end{remark}

\noindent
{\bf Acknowledgments.}
We thank Fritz Gesztesy and Mark Malamud for several helpful discussions and hints with respect to the literature. 
%%%%%%%%%%%%%%%%%%%%%%%%%%
%%%%%%%%%%%%%%%%%%%%%%%%%%

\end{document}